\newtheorem{remark}[theorem]{Remark}
\def\e{\varepsilon}
\def\ov{\overline}
\DeclareMathOperator{\Div}{div}
\DeclareMathOperator{\im}{Im}
\DeclareMathOperator{\prox}{prox}
\DeclareMathOperator{\dom}{dom}
\newcommand{\LL}{\mathcal{L}}
\newcommand{\Ph}{\mathcal{P}_h}
\newcommand{\Pbh}{(\mathcal{P}_h)}
\newcommand{\Mh}{\mathcal{M}_h}
\newcommand{\Mbh}{(\mathcal{M}_h)}
\newcommand{\RR}{\mathbb{R}}
\newcommand{\tinm}[1]{\text{\; #1 \;}}
\numberwithin{theorem}{section}
\newcommand{\TheTitle}{A semi-implicit scheme based on Arrow-Hurwicz method for saddle point problems} 
\newcommand{\TheAuthors}{M. Phan, C. Galusinski}
\title{{\TheTitle}\thanks{Submitted to the editors 01st december 2017.
}
}
\author{
  Minh  Phan\thanks{Laboratory IMATH (EA 2134), Universit\'e de Toulon, Toulon, France (\email{phan@univ-tln.fr}).}
  \and
  Cedric Galusinski\thanks{Laboratory IMATH (EA 2134), Universit\'e de Toulon, Toulon, France (\email{galusins@univ-tln.fr)}.}
}
\begin{document}

\maketitle

\begin{abstract}
We search saddle points for a large class of convex-concave Lagrangian. A generalized explicit iterative scheme based on Arrow-Hurwicz method converges to a saddle point of the problem. We also propose in this work, a convergent semi-implicit scheme in order to accelerate the convergence of the iterative process. Numerical experiments are provided for a nontrivial  numerical problem modeling an optimal shape problem of thin torsion rods \cite{alibert2013nonstandard}. This semi-implicit scheme is figured out in practice robustly efficient in comparison with the explicit one.
\end{abstract}
\begin{keywords}
Saddle point, Lagrangian, Primal-dual algorithm.
\end{keywords}

\begin{AMS}
  49M29, 65B99, 65K05.
\end{AMS}

\section{Introduction}
In calculus of variations, we usually deal with problems of the following kind
$$
 \inf_{u\in C}  \int_\Omega [\varphi(\nabla u) + h(u)]\, dx \leqno(\Ph)
$$
where $\Omega$ is an open bounded subset of $\RR^N$, the constraint is given by $$C= \left\{ u\in W^{1,p}(\Omega), u=u_0 \tinm{on} \partial \Omega  \right\},$$
and functions $\varphi:\RR^N\to\RR$ and $h:\RR\to\RR$ are convex continuous. The integrands satisfy suitable growth conditions to ensure the well-posedness of the problem.
\medskip

Let us consider for instance a problem arising in shape optimization of thin torsion rods \cite{alibert2013nonstandard} to be a prototype situation
\begin{align*}
m(s):= \inf \left\{ \int_\Omega \varphi(\nabla u) : u\in H^1_0(\Omega), \int_\Omega u = s \right\}
\end{align*}
with $N=2$ and $\varphi$ being a convex function given by
\begin{align*}
\varphi(z) := 
\begin{cases}
\frac{1}{2}(|z|^2 +1) &\tinm{if} |z| \ge 1 \\
|z| &\tinm{if} |z| \le 1.
\end{cases}
\end{align*}
This is a convex problem without presence of $h$. Moreover, $\varphi$ is not differentiable at the origin.

\medskip
By duality argument, we can rewrite problem $\Pbh$ as  a saddle point problem
$$
\inf_{u\in C}\sup_{p\in K}  \int_\Omega [\nabla u \cdot p + h(u) - \varphi^*(p)]dx \leqno(\Mh)
$$
with $K= L^{p'}(\Omega;\RR^N)$ and $\varphi^*$ being the Fenchel conjugate of $\varphi$ \cite{ekeland1974analyse}. We denote by $L(u,p)$ the Lagrangian
$$ L(u,p) =\int_\Omega [\nabla u \cdot p + h(u) - \varphi^*(p)]dx.$$
Our aim is to find solutions of $\Pbh$ by means of the saddle point problem $\Mbh$. We recall that a saddle point $(\hat u,\hat p)$ of $L(u,p)$ in $C\times K$  is characterized by the inequalities
\begin{align}
 L(\hat u, p) \le L(\hat u,\hat p) \le L(u,\hat p),\ \forall u\in C,\ \forall p\in K.\label{chaUP}
\end{align}
Usually, we immediately think of the gradient descent-ascent so as to seek a saddle point. For a general Lagrangian $L(u,p)$, the simplest approach introduced by Arrow and Hurwicz has the form
\begin{align*}
&p_{n+1} = \Pi_K \Big( p_n +\tau_n \frac{\partial L}{\partial p}(u_n,p_n) \Big)\\
&u_{n+1} = \Pi_C \Big(u_n - \tau_n \frac{\partial L}{\partial u}(u_n,p_n) \Big).
\end{align*}
where $\Pi_K, \Pi_C$ are orthogonal projections on the closed convex sets $K$ and $C$, respectively.
However, this first-order iterative optimization algorithm
converges under very stringent conditions (like strict convexity-concavity) and special choosing of stepsizes $\tau_n \to 0,\; \sum_{n=0}^\infty \tau_n = \infty$ \cite{kallio1994perturbation}. To overcome these difficulties, L. D. Popov \cite{popov1980modification} gave a modification of the Arrow-Hurwicz method by introducing the so-called "leading" point, denoted by $ (\ov u_n, \ov p_n)$, which is an auxiliary point in order to jump to the next approximation with the help of the gradient direction,
\begin{align*}
&p_{n+1} = \Pi_K \Big( p_n +\tau \frac{\partial L}{\partial p}(\ov u_n, \ov p_n) \Big)\\
&u_{n+1} = \Pi_C \Big(u_n - \tau \frac{\partial L}{\partial u}(\ov u_n, \ov p_n) \Big)\\
&\ov p_{n+1} = \Pi_K \Big( p_{n+1} +\tau \frac{\partial L}{\partial p}(\ov u_n, \ov p_n) \Big)\\
&\ov u_{n+1} = \Pi_C \Big(u_{n+1} - \tau \frac{\partial L}{\partial u}(\ov u_n, \ov p_n) \Big).
\end{align*}
In his paper, he proved that there exists a positive scalar $\tau_0$ such that the modified algorithm converges for all constant stepsize $\tau$ taken in the interval $0<\tau < \tau_0$. This improvement enlarges the class of applicable problems whose convex-concave Lagrangians have derivatives satisfying Lipschitz conditions. It is clear that leading points makes the iterative processes more stable. But, because of extra projections, the more complicated the projections are, the heavier computation is. So, in some cases, replacing the extra projections as well as leading points is necessary. Chambolle-Pock et al. \cite{pock2009algorithm,chambolle2011first} started dealing with a typical Lagrangian which is a linear form
\begin{align*}
L(u,p) = \langle Au, p\rangle + \langle f, u \rangle - \langle g, p \rangle,
\end{align*}
where $A$ is a bounded linear operator and $\langle\cdot,\cdot \rangle$ denotes the associated scalar product on Hilbert spaces of variables $u,p$.
With these settings, they used simply computed leading points:\; $\ov p_{n+1} = p_{n+1}$ and $\ov u_{n+1} = 2u_{n+1} -u_n$. The replaced leading points are just a linear extrapolation based on the current and previous iterates. And, it is proved that the iterative process
\begin{align}
\begin{aligned}
& p_{n+1} = \Pi_K ( p_n +\alpha (A\ov u_n - g))\\
& u_{n+1} = \Pi_C (u_n - \beta (A^* p_{n+1} + f))\\
& \ov u_{n+1} = 2u_{n+1} -u_n
\end{aligned} \label{lAlgo}
\end{align}
converges to a saddle point of $L(u,p)$ if we choose $\alpha,\beta >0$ such that $\alpha\beta\| A\|^2 <1$.  Here, $A^*$ denotes the adjoint of operator  $A$. We remark that the algorithm (\ref{lAlgo}) without the projectors $ \Pi_K$ and $ \Pi_C$, can be interpreted as the Arrow-Hurwicz  algorithm for the augmented Lagrangian
$$
L_\beta(u,p)=L(u,p)-\beta \langle A^* p , A^* p \rangle
$$
and the augmented parameter $\beta$ is optimal for the convergence.

After that, there are many efforts to accelerate the convergence of algorithms of Arrow-Hurwicz type, as shown in the same paper \cite{chambolle2011first}, such as with varied stepsizes, modified the extrapolation of leading points and  implicit schemes.  In a more recent paper \cite{chambolle2016ergodic}, metric changes allow to enlarge stepsize. Such results are obtained for a general Lagrangian of type
\begin{equation}\label{L} 
L(u,p)=\langle Au, p\rangle + F(u) - G(p).
\end{equation}
Furthermore, numerous results on the convergence rates are achieved. Basically, the main idea is to handle the proximity technique with implicit schemes. But, the resolvent of the proximity should be easy to compute in practice. The most important is that the projections are completely ignored in almost proofs by setting $C, K$ Hilbert spaces. To the best of our knowledge, treating the Lagrangian of kind \eqref{L} with non trivial projections is still missing.

\medskip
Problem $\Pbh$ can be modeled for free boundary problems \cite{AlCa1981free}, two-phase problem of Cahn-Hilliard fluid \cite{bouchitte1994cahn}, this kind of problem also often appears in computer vision as imaging problems \cite{chambolle2011first} where $\varphi$ stands for regularizers and $h$ for dataterms. But, in almost cases, $h$ owns non convexity. The interest is to find global solutions among local ones of problem $\Pbh$. In that circumstance, we shall need some convexification recipe. By duality theory, fortunately, convexification procedure \cite {BouFra2015duality,BouFra2016duality,BouPha2017duality} often gives convex representations of $\Pbh$ under a min-max form of $L(u,p)$ in \eqref{L}.  We hence restrict to the case $F$, $G$ being convex for simplification.

\medskip
In this paper, we deal with the general Lagrangian \eqref{L}  with non trivial projections on closed convex sets $C,K$ of Hilbert spaces. We then, arrive to generalize the scheme \eqref{lAlgo} to general $F,G$ being differentiable, namely with explicit scheme leading to simple implementation. In Section \ref{escheme}, we prove the convergence of this explicit scheme. In the next section, we propose a semi-implicit scheme which is very robust in comparison with the fully explicit scheme since the numerical parameters $\alpha,\beta$ do not depend on $\lVert A \rVert$. The following section deals with numerical results and exhibits the advantage of such a semi-implicit algorithm, namely for a variant with a splitting technique  reducing the number iteration of implicit solvers. The computational cost of the different algorithms are then compared and shows the interest of the proposed algorithm.
The last section is intended for conclusion.

\bigskip\noindent
 {\bf Acknowledgments.} 
We are very grateful to G. Bouchitt\'e and I. Fragal\`a  for fruitful discussions and collaborations on the topic.
\bigskip

\section{Saddle point problem and explicit scheme \label{escheme}}
Let $C$ and $K$ be closed convex non empty subsets of Hilbert spaces $V$ and $W$, respectively. We denote by $\langle \cdot, \cdot \rangle$ the inner product and by $\lVert \cdot\rVert:= \sqrt{\langle\cdot,\cdot\rangle}$ the corresponding norm on both Hilbert spaces without ambiguity. Given $A: V \to W$ a continuous linear operator with its induced norm
\begin{align*}
\lVert A \rVert := \sup \left\{ \lVert Av\rVert : v\in V, \lVert x \rVert \le 1 \right\}.
\end{align*}
We consider an inf-sup optimization problem in the very generic form 
\begin{align}
\inf_{u\in C} \sup_{p\in K} L(u,p) \quad \tinm{with} \quad L(u,p)=\langle Au, p\rangle + F(u) - G(p),\label{Pb}
\end{align}
where $F$, $G$ are convex functions and supposed to be differentiable. Their derivatives satisfy the Lipschitz condition with constants $L_f$, $L_g$, respectively. We assume that the set of saddle points $\hat C\times \hat K$ of Lagrangian $L(u,p)$ is not empty. 

\medskip
The aim is to find a saddle point $(\hat u,\hat p)$ of $L(u,p)$ in $C\times K$. 
Now, let us generalize the explicit scheme introduced in \eqref{lAlgo} for the general saddle point problem \eqref{Pb}. Basically, we keep the main idea of the convergence proof by Chambolle-Pock et al. \cite{pock2009algorithm} with additional technical difficulties due to additional convex function $F$ and $G$.

\medskip
For initialization, we choose $\alpha, \beta >0$, $(u_0,p_0)\in C\times K$, $\ov u_0 = u_0$. We propose an iterative algorithm as below
\begin{align}
\begin{cases}
p_{n+1} = \Pi_K ( p_n +\alpha (A\ov u_n - G'(p_n)))\\
u_{n+1} = \Pi_C (u_n - \beta (A^* p_{n+1} + F'(u_n)))\\
\ov u_{n+1} = 2u_{n+1} -u_n
\end{cases}\label{AlgG}
\end{align}
where $A^*$ stands for the adjoint of operator  $A$ and $\Pi_K$, $\Pi_C$ respectively denote the orthogonal projectors on closed convex sets $K$, $C$.\\
We get below the convergence result:
\bigskip

\begin{theorem}
\label{Eth}
Under the standing assumption, for all $\alpha,\beta$ such that 
\begin{align}
\begin{aligned}
0<\alpha <\frac{2}{L_g}, \quad & \quad
0 <\beta < \frac{2}{L_f}, \\
\alpha \beta \left(\lVert A \rVert^2 - \frac{L_f L_g}{4} \right) &+ \frac{\alpha L_g}{2} + \frac{\beta L_f}{2} < 1,
\end{aligned}
\label{ab}
\end{align}
the proposed algorithm \eqref{AlgG} converges to a saddle point of $L(u,p)$ in the set $C\times K$.
\end{theorem}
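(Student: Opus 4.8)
The plan is to mimic the convergence proof of Chambolle and Pock for the linear case, building an energy (Lyapunov) functional of the iterates that decreases along \eqref{AlgG} and then passing to the limit by weak compactness plus an Opial argument; the extra work compared with \cite{chambolle2011first} is to absorb the contributions of $F'$ and $G'$ and to keep the projections onto $C$ and $K$ throughout. First I would rewrite the two projection steps of \eqref{AlgG} as variational inequalities: for all $p\in K$, $u\in C$,
\begin{align*}
\tfrac1\alpha\ps{p_{n+1}-p_n,\,p-p_{n+1}}\ &\le\ \ps{A\ov u_n-G'(p_n),\,p-p_{n+1}}, \\
\tfrac1\beta\ps{u_{n+1}-u_n,\,u-u_{n+1}}\ &\le\ -\ps{A^*p_{n+1}+F'(u_n),\,u-u_{n+1}}.
\end{align*}
Testing these with $p=\hat p$ and $u=\hat u$ and applying the three-point identity $\ps{a-b,\,c-b}=\tfrac12\norm{a-b}^2+\tfrac12\norm{c-b}^2-\tfrac12\norm{a-c}^2$ turns the left-hand sides into one-step decrements of $\tfrac1{2\alpha}\norm{p_n-\hat p}^2$ and $\tfrac1{2\beta}\norm{u_n-\hat u}^2$, together with the penalty terms $\tfrac1{2\alpha}\norm{p_{n+1}-p_n}^2$ and $\tfrac1{2\beta}\norm{u_{n+1}-u_n}^2$.

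Next I would treat the three types of terms on the right-hand sides. For the gradient terms I would use the co-coercivity of $F'$ and $G'$ (valid since $F,G$ are convex with Lipschitz gradients): writing $p_{n+1}-\hat p=(p_{n+1}-p_n)+(p_n-\hat p)$ and combining $\ps{G'(p_n)-G'(\hat p),p_n-\hat p}\ge \tfrac1{L_g}\norm{G'(p_n)-G'(\hat p)}^2$ with Young's inequality yields $-\ps{G'(p_n)-G'(\hat p),p_{n+1}-\hat p}\le \tfrac{L_g}4\norm{p_{n+1}-p_n}^2$, while the remaining $-\ps{G'(\hat p),p_{n+1}-\hat p}$ is handled by the saddle-point optimality $A\hat u-G'(\hat p)\in N_K(\hat p)$, which gives $-\ps{G'(\hat p),p_{n+1}-\hat p}\le -\ps{A\hat u,p_{n+1}-\hat p}$; the situation for $F$ is symmetric. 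The bilinear part $\ps{A\ov u_n,p_{n+1}-\hat p}-\ps{A^*p_{n+1},u_{n+1}-\hat u}$ equals $\ps{A(\ov u_n-u_{n+1}),p_{n+1}-\hat p}$, and substituting $\ov u_n-u_{n+1}=(u_n-u_{n-1})-(u_{n+1}-u_n)$ splits it into the telescoping pair $\ps{A(u_n-u_{n-1}),p_n-\hat p}-\ps{A(u_{n+1}-u_n),p_{n+1}-\hat p}$ plus a cross remainder $\ps{A(u_n-u_{n-1}),p_{n+1}-p_n}$, the latter bounded by $\norm A\,\norm{u_n-u_{n-1}}\,\norm{p_{n+1}-p_n}$.

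Collecting everything, and discarding the nonnegative primal-dual gap $L(u_{n+1},\hat p)-L(\hat u,p_{n+1})\ge 0$ (again from convexity of $F,G$ and the saddle inequalities \eqref{chaUP}), I obtain a relation of the form
\begin{align*}
\Psi_{n+1}+\Big(\tfrac1{2\alpha}-\tfrac{L_g}4\Big)\norm{p_{n+1}-p_n}^2+\Big(\tfrac1{2\beta}-\tfrac{L_f}4\Big)\norm{u_{n+1}-u_n}^2\ \le\ \Psi_n+\norm A\,\norm{u_n-u_{n-1}}\,\norm{p_{n+1}-p_n},
\end{align*}
with $\Psi_n=\tfrac1{2\alpha}\norm{p_n-\hat p}^2+\tfrac1{2\beta}\norm{u_n-\hat u}^2+\ps{A(u_n-u_{n-1}),p_n-\hat p}$. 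The crucial point — and the main obstacle — is now a careful bookkeeping of constants: after summing over $n$, the residual quadratic form in the increments $\norm{u_n-u_{n-1}}$ and $\norm{p_{n+1}-p_n}$, with diagonal coefficients $\tfrac12(\tfrac1\beta-\tfrac{L_f}2)$, $\tfrac12(\tfrac1\alpha-\tfrac{L_g}2)$ and off-diagonal $-\tfrac12\norm A$, is positive definite exactly when both $\tfrac1\beta-\tfrac{L_f}2$ and $\tfrac1\alpha-\tfrac{L_g}2$ are positive (that is $\beta<2/L_f$, $\alpha<2/L_g$) and their product exceeds $\norm A^2$; multiplying the latter inequality by $\alpha\beta$ gives precisely $\alpha\beta(\norm A^2-L_fL_g/4)+\alpha L_g/2+\beta L_f/2<1$, i.e. \eqref{ab}. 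Under \eqref{ab} one also gets $\alpha\beta\norm A^2<1$, so that after perturbing $\Psi_n$ by a small multiple of $\norm{u_n-u_{n-1}}^2$ one has a genuinely nonnegative Lyapunov functional that is nonincreasing and has summable increments.

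Finally, from the nonnegative nonincreasing Lyapunov functional I would conclude that $(u_n),(p_n)$ are bounded and that $\norm{u_{n+1}-u_n}\to0$, $\norm{p_{n+1}-p_n}\to0$, hence $\ov u_n-u_n\to0$. Along a subsequence $(u_{n_k},p_{n_k})\rightharpoonup(u^\star,p^\star)$, with $(u^\star,p^\star)\in C\times K$ since closed convex sets are weakly closed. Passing to the limit in the variational inequalities of the first step — using weak continuity of $A,A^*$, the vanishing of the increments, and a Minty-type monotonicity argument to cope with the fact that $F',G'$ are only norm-to-norm continuous — shows that $(u^\star,p^\star)$ satisfies the optimality conditions, hence is a saddle point of $L$ on $C\times K$. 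Since the Lyapunov functional is nonincreasing for \emph{every} saddle point and its cross term tends to $0$, an Opial-type lemma upgrades this to convergence of the whole sequence $(u_n,p_n)$ to a saddle point of $L$ in $C\times K$, which is the assertion.
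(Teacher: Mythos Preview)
Your scaffold matches the paper's: derive a one-step energy inequality from the projection characterizations, telescope the bilinear term $\langle A(u_{n+1}-u_n),\,p_{n+1}-\hat p\rangle$, absorb the residual cross term by Young's inequality, and close with an Opial-type argument. Your reading of \eqref{ab} as positive definiteness of the $2\times2$ form in the increments (diagonal entries $\tfrac12(\tfrac1\beta-\tfrac{L_f}2)$, $\tfrac12(\tfrac1\alpha-\tfrac{L_g}2)$, off-diagonal $\tfrac12\|A\|$) is exactly what the paper obtains after its $\delta,\gamma$ bookkeeping. The substantive difference is in controlling the gradient contributions: the paper writes $F'(u_n)-F'(\hat u)=f_n(u_n-\hat u)$ with $f_n=\int_0^1 F''((1-\theta)\hat u+\theta u_n)\,d\theta$, factors this positive semidefinite form as $M_{f_n}^*M_{f_n}$, and completes the square to get $-\langle f_nU_n,U_{n+1}\rangle\le\tfrac{L_f}4\|U_{n+1}-U_n\|^2$; you reach the identical bound via Baillon--Haddad cocoercivity $\langle F'(u)-F'(v),u-v\rangle\ge\tfrac1{L_f}\|F'(u)-F'(v)\|^2$ plus Young. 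Your route is more elementary and, notably, works under the stated hypotheses alone, whereas the paper's integral-remainder step tacitly uses a second derivative that is neither assumed nor automatic in an infinite-dimensional Hilbert space. Two small slips: the projection variational inequality should read $\ge$, not $\le$ (your subsequent use is consistent with the correct sign); and the phrase about ``discarding the nonnegative primal-dual gap'' is extraneous here --- that term appears when $F,G$ are treated implicitly via prox, but in this explicit-gradient scheme the normal-cone saddle conditions together with cocoercivity already close the estimate with nothing left over.
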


\bigskip
Before proving the theorem, let us recall some important properties of orthogonal projection on a closed convex set.
\begin{proposition}
Let $\Pi_D$ be an orthogonal projection on a closed convex subset $D$ in a Hilbert space $V$. Followings hold true: 
\begin{itemize}
\item[(i)] For every $u,v\in V$,
\begin{align}
\langle \Pi_D(u) - \Pi_D(v), u - v \rangle \ge \Vert \Pi_D(u) - \Pi_D(v) \rVert ^2.
\label{Pmono}
\end{align}
As a consequence, the projection is a monotone $1$-Lipschitz operator.
\item[(ii)] For every $z \in D$,
\begin{align*}
\lVert u - \Pi_D(u) \rVert^2 + \lVert z - \Pi_D(u) \rVert^2 \le \lVert u - z \rVert^2. 
\end{align*}
In particular, if $0\in D$ then
\begin{align}
\lVert u - \Pi_D(u) \rVert^2 + \lVert \Pi_D(u) \rVert^2 \le \lVert u  \rVert^2. \label{Pythagorean}
\end{align}
\item[(iii)] For every $v\in V$,
\begin{align}
\Pi_D(u)  -v = \Pi_{D-v}(u-v). \label{trPi}
\end{align}
where $D-v$ is the Minkowski addition, i.e. $D-v := \{ d-v: d\in D \}$.
\end{itemize}
\end{proposition}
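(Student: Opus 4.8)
The common engine for all three items is the \emph{variational characterization} of the projection: for $u\in V$, a point $p\in D$ equals $\Pi_D(u)$ if and only if
$$\ps{u - p,\, z - p} \le 0 \qquad \text{for all } z \in D. \quad (\star)$$
My plan is to establish $(\star)$ first and then read off (i)--(iii) as short consequences. To obtain $(\star)$ I would recall that $\Pi_D(u)$ is the unique minimizer over the closed convex set $D$ of the strictly convex, coercive function $z\mapsto \tfrac12\norm{u-z}^2$ (existence and uniqueness being the standard Hilbert-space projection theorem, provable via the parallelogram law or simply citable). Writing first-order optimality along the segment $t\mapsto p + t(z-p)$ for arbitrary $z\in D$ and differentiating at $t=0^+$ yields exactly $(\star)$; conversely $(\star)$ forces $p$ to be the minimizer through the expansion used for (ii) below. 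This is the only place any genuine analysis enters, so I expect no serious obstacle: once $(\star)$ is in hand, each item reduces to an algebraic manipulation.

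For (i), set $p=\Pi_D(u)$ and $q=\Pi_D(v)$. Applying $(\star)$ to $p$ with test point $z=q\in D$, and to $q$ with test point $z=p\in D$, gives $\ps{u-p,\,q-p}\le 0$ and $\ps{v-q,\,p-q}\le 0$. Adding these and regrouping the inner products produces $\ps{(u-v)-(p-q),\,q-p}\le 0$, which rearranges to $\norm{p-q}^2 \le \ps{u-v,\,p-q}$, i.e.\ precisely \eqref{Pmono}. The two stated consequences are then immediate: the right-hand side being nonnegative gives monotonicity, while Cauchy--Schwarz in the form $\ps{u-v,\,p-q}\le \norm{u-v}\,\norm{p-q}$ combined with \eqref{Pmono} yields $\norm{p-q}\le\norm{u-v}$, the $1$-Lipschitz bound.

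For (ii), write $p=\Pi_D(u)$ and expand, for $z\in D$, the identity $\norm{u-z}^2 = \norm{(u-p)+(p-z)}^2 = \norm{u-p}^2 + 2\ps{u-p,\,p-z} + \norm{p-z}^2$. The cross term equals $-2\ps{u-p,\,z-p}$, which is nonnegative by $(\star)$, so discarding it gives $\norm{u-p}^2 + \norm{z-p}^2 \le \norm{u-z}^2$, the claim; specializing to $z=0$ when $0\in D$ yields \eqref{Pythagorean}.

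For (iii) I would argue by translation invariance: the map $z\mapsto w=z-v$ is a bijection from $D$ onto $D-v$ satisfying $\norm{(u-v)-w}=\norm{u-z}$, so it carries the minimizer $p=\Pi_D(u)$ of the distance to $u$ over $D$ to the minimizer $p-v$ of the distance to $u-v$ over $D-v$; by uniqueness this minimizer is $\Pi_{D-v}(u-v)$, giving \eqref{trPi}. Equivalently, one checks $(\star)$ directly for the pair $(u-v,\,p-v)$ over $D-v$ and observes, after the substitution $w=z-v$, that it coincides verbatim with $(\star)$ for $(u,p)$ over $D$, so $p-v$ satisfies the defining property of $\Pi_{D-v}(u-v)$.
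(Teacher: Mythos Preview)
Your proof is correct and follows essentially the same route as the paper: both take the variational inequality $(\star)$ (the paper's \eqref{ChPi}) as the single tool, then for (i) apply it twice with the roles of $u,v$ swapped and add, for (ii) expand $\lVert u-z\rVert^2$ and absorb the cross term via $(\star)$, and for (iii) verify that $(\star)$ for $(u,p)$ over $D$ is literally $(\star)$ for $(u-v,p-v)$ over $D-v$. Your write-up is slightly more complete (you sketch why $(\star)$ holds and spell out the Cauchy--Schwarz step for the $1$-Lipschitz bound), but there is no substantive difference in strategy.
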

\begin{proof}
(i) For every $u\in V$, the characterization of $\Pi_D(u)$ is given by
\begin{align}
\langle u - \Pi_D(u), z- \Pi_D(u) \rangle \le 0 \quad  \forall z \in D\label{ChPi}
\end{align}
We choose $z =\Pi_D(v)$, then
\begin{align}
\langle u - \Pi_D(u), \Pi_D(v)- \Pi_D(u) \rangle \le 0.\label{ineq2}
\end{align}
Similarly, we have
\begin{align}
\langle v - \Pi_D(v), \Pi_D(u)- \Pi_D(v) \rangle \le 0.
\label{ineq3}
\end{align}
Summing inequalities \eqref{ineq2} and \eqref{ineq3}, we obtain the inequality \eqref{Pmono}. The monotonicity and Lipschitz continuity are direct consequences.\par
(ii) For every $z\in D$, by using \eqref{ChPi}, we have
\begin{align*}
\langle u- z, u- z \rangle 
=& \lVert u- \Pi_D(u) \rVert^2 + \lVert z - \Pi_D \rVert^2 + 2 \langle u- \Pi_D(u), \Pi_D(u) -z \rangle \\
\ge & \lVert u- \Pi_D(u) \rVert^2 + \lVert z - \Pi_D \rVert^2.
\end{align*}

(iii) For every $z\in D$, we recall the characterization \eqref{ChPi},
\begin{align*}
\langle u - \Pi_D(u), z- \Pi_D(u) \rangle \le 0 .
\end{align*}
This is equivalent to
\begin{align*}
\langle u-v - (\Pi_D(u) -v), z-v - (\Pi_D(u)-v) \rangle \le 0 ,\quad \forall z\in D.
\end{align*}
It is to say that $\Pi_D(u) -v$ is the projection of $(u-v)$ on the set $D-v$.
\end{proof}

\bigskip

\begin{proof}[Proof of Theorem \ref{Eth}]
Let $(\hat u,\hat p)\in \hat C\times \hat K$ be a saddle point of $L(u,p)$.
Beside the characterization given by the inequalities \eqref{chaUP}, saddle points of the problem \eqref{Pb} can be characterized by, see \cite{ekeland1974analyse} in detail: 

$(\hat u, \hat p)$ is a saddle point of $L(u,p)$ in $C\times K$ if and only if
\begin{align}
\langle A^*\hat p + F'(\hat u), u - \hat u \rangle \ge 0 \; \tinm{for all} \; u\in C \label{chaU}\\
\langle A\hat u - G'(\hat p), p -\hat p \rangle \le 0 \; \tinm{for all}\; p\in K. \label{chaP}
\end{align}
Under the standing assumption, $F$ and $G$ have second derivatives defined almost everywhere. Then, we obtain
\begin{align}
F'(u) - F'(v) &= \int_0^1 F''((1-\theta)v + \theta u))(u - v)\, d\theta \quad \forall u, v \in V, \label{dF2}\\
G'(p) - G'(q) &= \int_0^1 G''((1-\theta)q + \theta p))(p - q)\, d\theta \quad \forall p,q \in W. \label{dG2}
\end{align}
We define
\begin{align*}
f_{v,u} := \int_0^1 F''((1-\theta)v + \theta u))d\theta , \qquad \qquad
g_{q,p} := \int_0^1 G''((1-\theta)q + \theta p))d\theta.
\end{align*}
We observe that $f_{v,u}$, $g_{q,p}$ are symmetric bilinear forms on product spaces $V\times V$ and $W\times W$, respectively.
Since $F$, $G$ are convex functions, $f_{v,u}$ and $g_{q,p}$ are  positive semi-definite. 
Such bilinear forms admit factorizations by operators, see \cite{rudin1991functional} Theorem 12.33 p.331, as 
$$
\langle f_{v,u} V,V \rangle=\langle M_{f_{v,u}} V,M_{f_{v,u}} V \rangle, \qquad \qquad \langle g_{q,p} W, W \rangle=\langle M_{g_{q,p}} W,M_{g_{q,p}} W \rangle.
$$
Furthermore, by Lipschitz conditions, we have
\begin{align*}
\langle f_{v,u} v',u' \rangle \le L_f\lVert v' \rVert\lVert u' \rVert \quad \forall u', v' \in V \, \tinm{ and } \,
 \langle g_{q,p} q',p' \rangle \le L_g\lVert q' \rVert\lVert p' \rVert \quad \forall p', q' \in W.
\end{align*}
Hence, $f_{v,u}$, $g_{q,p}$ are continuous bilinear forms, i.e. $f_{v,u}\in \LL(V\times V;\RR)$, $g_{q,p}\in \LL(W\times W;\RR)$.
In what follows,  the role of symmetric positive semi-definite bilinear forms $f_{v,u}$, $g_{q,p}$ is exploited.


We firstly introduce some useful notation:
\begin{align}
\begin{aligned}
&P_n = p_n -\hat p, \quad
U_n = u_n -\hat u, \quad
\ov U_n = \ov u_n - \hat u, \quad
f_n = f_{\hat u, u_n} \, ,\quad
g_n = g_{\hat p, p_n}\, ,\\
&\widetilde K = K - \hat p, \qquad \widetilde C = C- \hat u.
\end{aligned}
\label{nota1}
\end{align}
By using \eqref{trPi} with the settings \eqref{nota1} above, we can rewrite the iterative process \eqref{AlgG} as
\begin{align*}
\begin{cases}
P_{n+1} = \Pi_{\widetilde K} (P_n +\alpha (A\ov u_n - G'(p_n)))\\
U_{n+1} = \Pi_{\widetilde C} (U_n - \beta (A^* p_{n+1} + F'(u_n )))\\
\ov u_{n+1} = 2u_{n+1} -u_n.
\end{cases}
\end{align*}
We notice that the couple of equations \eqref{dF2} - \eqref{dG2}  give us the following representation
\begin{align}
F'(u_n) - F'(\hat u) &= f_n U_n \label{df2}\\
G'(p_n) - G'(\hat p) &= g_n P_n. \label{dg2}
\end{align}
Since $0\in \widetilde K$, we can handle the inequality \eqref{Pythagorean} in order to deduce that
\begin{align*}
\lVert P_{n+1} \rVert ^2 
&\le \lVert P_n +\alpha (A\ov u_n - G'(p_n)) \rVert^2 - \lVert P_{n+1} -  P_n -\alpha (A\ov u_n - G'(p_n)) \rVert^2 \\
&= \lVert P_n \rVert^2 - \lVert P_{n+1} - P_n \rVert^2 + 2\alpha \langle A \ov u_n - G'(p_n), P_{n+1} \rangle \\
&\le \lVert P_n \rVert^2 - \lVert P_{n+1} - P_n \rVert^2 + 2\alpha \langle A \ov U_n - g_n P_n), P_{n+1} \rangle.
\end{align*}
The second inequality is obtained by adding a non negative amount $-\langle A\hat u - G'(\hat p), p -\hat p \rangle$, see \eqref{chaP} for evidence and passing the equality \eqref{dg2}.
Similarly, we just repeat the same procedure for the variable $u$
\begin{align*}
\lVert U_{n+1} \rVert ^2 
&\le \lVert U_n - \beta (A^* p_{n+1} + F'(u_n)) \rVert^2 - \lVert U_{n+1} -  U_n + \beta (A^* p_{n+1} + F'(u_n) ) \rVert^2 \\
&\le \lVert U_n \rVert^2 - \lVert U_{n+1} - U_n \rVert^2 - 2\beta \langle A^* P_{n+1} + f_n U_n, U_{n+1} \rangle.
\end{align*}
For short, let us denote
\begin{align*}
S_n &= \alpha \lVert U_n\rVert^2 + \beta \lVert P_n \rVert^2 , \\
R_n &= \alpha \lVert U_{n+1} - U_n \rVert^2 + \beta \lVert P_{n+1} - P_n \rVert^2.
\end{align*}
It then follows that
\begin{align}
S_{n+1} \le S_n -R_n -2\alpha\beta \left( \langle A^* P_{n+1} + f_n U_n, U_{n+1} \rangle - \langle A \ov U_n - g_n P_n, P_{n+1} \rangle \right). \label{Sne}
\end{align}
By definition of adjoint operator, $\langle A^* P_{n+1} ,U_{n+1}\rangle = \langle AU_{n+1}, P_{n+1}\rangle$, we observe that
\begin{align*}
&-2\alpha\beta \left[ \langle A^* P_{n+1} + f_n U_n, U_{n+1} \rangle - \langle A \ov U_n - g_n P_n, P_{n+1} \rangle \right]\\
=& -2\alpha\beta \left[ \langle  f_n U_n, U_{n+1} \rangle +\langle g_nP_n, P_{n+1}\rangle - \langle A (\ov U_n -  U_{n+1}), P_{n+1}\rangle \right].
\end{align*}
In the followings, let us do some necessary estimations. By using definition of leading point $\ov U_n$ and continuity of operator $A$, we deduce that
\begin{align*}
\langle A(U_{n+1} - \ov U_n), P_{n+1}\rangle 
=& \langle A(U_{n+1} - U_n), P_{n+1}\rangle - \langle A(U_n -  U_{n-1}), P_{n+1}\rangle \\
=& \langle A(U_{n+1} - U_n), P_{n+1}\rangle - \langle A(U_n -  U_{n-1}), P_n\rangle \\
&- \langle A(U_n -  U_{n-1}), P_{n+1} - P_n\rangle \\
\ge & \langle A(U_{n+1} - U_n), P_{n+1}\rangle - \langle A(U_n -  U_{n-1}), P_n\rangle \\
& - \lVert A \rVert \lVert U_n -U_{n-1} \rVert \lVert P_{n+1} - P_n \rVert.
\end{align*}
We have already mentioned that the bilinear form $f_n$ have positive square root $M_{f_n}$. By taking into account this factorization, we get
\begin{align}
\begin{aligned}
- \langle  f_n U_n, U_{n+1} \rangle 
&= -\langle M_{f_n}  U_{n+1}, M_{f_n} U_{n+1} \rangle -  \langle M_{f_n} (U_n - U_{n+1}), M_{f_n} U_{n+1} \rangle \\
&\le (-1+1) \lVert M_{f_n} U_{n+1} \rVert^2 + \frac{1}{4}\lVert M_{f_n} (U_{n+1} - U_n) \rVert^2\\
&\le \frac{L_f}{4}\lVert U_{n+1} - U_n \rVert^2,
\end{aligned}\label{estimF}
\end{align}
and analogously,
\begin{align}
- \langle  g_n P_n, P_{n+1} \rangle 
\le  \frac{L_g}{4}\lVert P_{n+1} - P_n \rVert^2.
\label{estimG}
\end{align}
We recall that the inequality $2ab \le (\delta a^2 + b^2/\delta)$ holds true for any $a,b$ and any $\delta >0$.
We now make use of the previous estimations. For any $\delta >0$, it holds
\begin{align*}
S_{n+1}
\le  S_n & -\alpha \left(1- \frac{\beta L_f}{2} \right) \lVert U_{n+1} - U_n \rVert^2 - \beta \left(1- \frac{\alpha L_g}{2} \right)\lVert P_{n+1} - P_n \rVert^2 \\
& -2\alpha\beta \left[ \langle A(U_{n+1} - U_n), P_{n+1}\rangle - \langle A(U_n -  U_{n-1}), P_n\rangle \right] \\
&+ 2\alpha\beta  \lVert A \rVert \Vert U_n -U_{n-1} \rVert \lVert P_{n+1} - P_n \rVert \\
\le  S_n & -\alpha \left(1- \frac{\beta L_f}{2} \right) \lVert U_{n+1} - U_n \rVert^2 - \beta \left(1- \frac{\alpha L_g}{2} \right)\lVert P_{n+1} - P_n \rVert^2 \\
& -2\alpha\beta \left[ \langle A(U_{n+1} - U_n), P_{n+1}\rangle - \langle A(U_n -  U_{n-1}), P_n\rangle \right] \\
&+ \alpha\beta  \lVert A \rVert \left( \delta\Vert U_n -U_{n-1} \rVert^2 + \frac{1}{\delta}\lVert P_{n+1} - P_n \rVert^2 \right).
\end{align*}
Given $M\ge N \ge 1$ and $\gamma >0$, let us take the sum of the inequality \eqref{Sne} from $N$ up to $M$:
\begin{align*}
S_{M+1} \le S_{N} & -\alpha \left(1- \frac{\beta L_f}{2} \right)\sum_{n=N}^{M} \lVert U_{n+1} - U_n \rVert^2 - \beta \left(1- \frac{\alpha L_g}{2} \right) \sum_{n=N}^{M} \lVert P_{n+1} - P_n \rVert^2 \\
& -2\alpha\beta \left[ \langle A(U_{M+1} - U_M), P_{M+1}\rangle - \langle A(U_N -  U_{N-1}), P_N\rangle \right] \\
&+ \alpha\beta  \lVert A \rVert \delta \sum_{n=N-1}^{M-1} \Vert U_{n+1} -U_n \rVert^2 + \frac{\alpha\beta  \lVert A \rVert}{\delta} \sum_{n=N}^{M} \lVert P_{n+1} - P_n \rVert^2\\
\le S_{N} & -\alpha \left(1- \frac{\beta L_f}{2} -\beta \lVert A \rVert \delta \right)\sum_{n=N}^{M-1} \lVert U_{n+1} - U_n \rVert^2\\
& - \beta \left(1- \frac{\alpha L_g}{2} -\frac{\alpha \lVert A\rVert}{\delta} \right) \sum_{n=N}^{M} \lVert P_{n+1} - P_n \rVert^2 \\
&-\alpha \left(1- \frac{\beta L_f}{2} \right) \lVert U_{M+1} - U_M \rVert^2 +  \alpha\beta  \lVert A \rVert \delta \Vert U_N -U_{N-1} \rVert^2 \\
& +2\alpha\beta \langle A(U_N -  U_{N-1}), P_N\rangle \\ & +\alpha\beta  \lVert A \rVert \left( \gamma\Vert U_{M+1} -U_{M} \rVert^2 + \frac{1}{\gamma}\lVert P_{M+1} \rVert^2 \right).
\end{align*}
We develop $S_{M+1}$ and rearrange the previous calculation to obtain 
\begin{align}
\begin{aligned}
\alpha \lVert U_{M+1} \rVert^2 
&+ \beta \left(1- \frac{\alpha \lVert A\lVert}{\gamma} \right) \lVert P_{M+1} \rVert^2 \\ 
&+\alpha \left(1- \frac{\beta L_f}{2} -\beta \lVert A \rVert \delta \right)\sum_{n=N}^{M-1} \lVert U_{n+1} - U_n \rVert^2\\
&+ \beta \left(1- \frac{\alpha L_g}{2} -\frac{\alpha \lVert A\rVert}{\delta} \right) \sum_{n=N}^{M} \lVert P_{n+1} - P_n \rVert^2 \\
&+\alpha \left(1- \frac{\beta L_f}{2} -\beta  \lVert A \rVert \gamma \right)\lVert U_{M+1} - U_M \rVert^2 \\
\le S_{N} &
+  \alpha\beta  \lVert A \rVert \delta \Vert U_N -U_{N-1} \rVert^2 +2\alpha\beta \langle A(U_N -  U_{N-1}), P_N\rangle.
\end{aligned}
\label{mainconv}
\end{align}
We observe that for $\alpha, \beta, \delta, \gamma$ being positive numbers, it holds
\begin{align}
\begin{cases}
\beta \left(1- \frac{\alpha \lVert A\lVert}{\gamma} \right) > 0 \\
\alpha \left(1- \frac{\beta L_f}{2} -\beta \lVert A \rVert \delta \right) > 0 \\
\beta \left(1- \frac{\alpha L_g}{2} -\frac{\alpha \lVert A\rVert}{\delta} \right) > 0 \\
\alpha \left(1- \frac{\beta L_f}{2} -\beta  \lVert A \rVert \gamma \right) > 0
\end{cases}
\qquad
\Longleftrightarrow
\qquad
\begin{cases}
\alpha \lVert A\lVert < \gamma < \frac{2-\beta L_f}{2\beta \lVert A\rVert}\\
\frac{2\beta \lVert A\rVert }{2- \beta L_f} < \frac{1}{\delta} < \frac{2-\alpha L_g}{2\alpha \lVert A\rVert}.
\end{cases}\label{IntConv}
\end{align}
We then derive from \eqref{IntConv} that
\begin{align}
\begin{aligned}
0<\alpha <\frac{2}{L_g}, \quad & \quad
0 <\beta < \frac{2}{L_f}, \\
\alpha \beta \left(\lVert A \rVert^2 - \frac{L_f L_g}{4} \right) &+ \frac{\alpha L_g}{2} + \frac{\beta L_f}{2} < 1.
\end{aligned}\label{paramConv}
\end{align}
Therefore, if we choose $(\alpha,\beta)$ satisfying \eqref{paramConv} and $(\delta,\gamma)$ given in the intervals defined by \eqref{IntConv}, the left-hand side of inequality \eqref{mainconv} is positive. We see that two sequences $\lVert U_{M+1} \rVert$ and $\lVert P_{M+1}\rVert$ are bounded while both $\lVert U_{M+1} - U_M \rVert$ and $\lVert P_{M+1} - P_M \rVert$ converge to 0 as $M\to +\infty$. Or, equivalently, $\{u_M\}$ and $\{p_M\}$ are bounded and the sequences $\lVert u_{M+1} - u_M \rVert$, $\lVert p_{M+1} - p_M \rVert$ go to 0 as $M\to +\infty$. So, there exists a subsequence $(u_{M_k},p_{M_k})$ weakly converging to some $(u_*,p_*)\in C\times K$, furthermore, $u_{M_k + 1}$ and $\ov u_{M_k}$ converge to $u_*$ whilst $p_{M_k +1}$ converges to $p_*$. By passing in the limit in \eqref{AlgG}, we have
\begin{align*}
p_* & = \Pi_K ( p_* +\alpha (A u_* - G'(p_*)),\\
u_* & = \Pi_C (u_* - \beta (A^* p_* + F'(u_*)).
\end{align*}
This shows that $(u_*,p_*)$ is solution to problem \eqref{Pb}. We now can replace $\hat u = u_*$, $\hat p = p_*$ and $N=M_k$ in \eqref{mainconv}. Then, as $k$ is large enough, the right-hand side of \eqref{mainconv} will arbitrarily small. Thus, for every $M > M_k$, $\lVert U_M \rVert$ and $\lVert P_M \rVert$ are as small as we want. We conclude that $(U_M,P_M)$ converges to $(0,0)$ as $M\to +\infty$.
\end{proof}

\begin{remark}
If $F=G=0$,  the problem \eqref{Pb} reduces to
\begin{align}
\inf_{u\in C} \sup_{p\in K} \langle Au, p\rangle ,
\label{PbS}
\end{align}
and the algorithm \eqref{AlgG} becomes
\begin{align}
\begin{cases}
p_{n+1} = \Pi_K ( p_n +\alpha A\ov u_n )\\
u_{n+1} = \Pi_C (u_n - \beta A^* p_{n+1})\\
\ov u_{n+1} = 2u_{n+1} -u_n.
\end{cases}\label{AlgoS}
\end{align}
In that case, positive parameters $\alpha, \beta$ will be chosen such that $\alpha\beta\lVert A\rVert^2 <1$, so that the process \eqref{AlgoS} converges to a saddle point of $L(u,p)$ in $C\times K$. The choice of $\alpha,\beta$ is now more flexible, without upper bounds of Lipschitz constants.
\end{remark}

\bigskip
\section{Semi-implicit scheme} 
In the explicit scheme based on Arrow - Hurwicz method, the algorithm \eqref{AlgG} is convergent under the permanent appearance of the boundedness of the linear operator $A$. We wonder whether we find out an process which converges to a saddle point of $L(u,p)$ whose numerical parameters do not depend on the boundedness of $A$.  In the next works, we shall show up such a process, namely semi-implicit algorithm.

The idea of the following algorithm comes from the ascertainment that the steps $\alpha$ and $\beta$ of the algorithms \eqref{AlgG}  and \eqref{AlgoS}are limited by large eigenvalues  of the operator $A$ whereas this steps could be increased for the part of the iterate associated to low eigenvalues. We then aim for progress with optimal steps $\alpha$ and $\beta$ whatever the considered eigenmode of the iterate.

We suppose that $C \subset \dom(A)$. Problem \eqref{Pb} can be written as
\begin{align}
\min_{(x,q)\in \ddot C} \max_{y\in K} \langle q, p \rangle + F(u) - G(p)
\end{align}
with $\ddot C:=\{(u,Au): u\in C \}$. Now, if we apply the process \eqref{AlgG} to the problem in this form, we obtain
\begin{align}
\begin{cases}
p_{n+1} = \Pi_K ( p_n +\alpha (\ov q_n - G'(p_n)))\\
(u_{n+1},q_{n+1}) = \Pi_{\ddot C} (u_n - \beta F'(u_n), q_n - \beta p_{n+1})\\
\ov q_{n+1} = 2q_{n+1} -q_n
\end{cases}
\label{preSI}
\end{align}
It is evident that $\ddot C \subset V\times \im A \subset V\times W$. As $V\times \im A$ is a linear subspace of $V\times W$, we deduce that 
\begin{align*}
\Pi_{\ddot C}  (u_n - \beta F'(u_n), q_n - \beta p_{n+1})
= &\; \Pi_{\ddot C}(\Pi_{V\times \im A} (u_n - \beta F'(u_n), q_n - \beta p_{n+1})) \\
= & \; \Pi_{\ddot C} ((u_n,q_n) - \beta \,\Pi_{V\times \im A}(F'(u_n), p_{n+1}) ).
\end{align*}
The projection $\Pi_{V\times \im A}(u^0,q^0)$ is indeed to search an optimizer for the problem
\begin{align*}
\tinm{minimize}\quad u\mapsto \frac{1}{2} \left( \lVert Au - q^0 \rVert^2 + \lVert u - u^0 \rVert^2 \right).
\end{align*}
This is in fact a proximal operator of a quadratic form. Its resolvent is easily determined,
\begin{align*}
 u^* = \prox_{A,q^0}(u^0)= (A^*A + I)^{-1}(A^* q^0 + u^0).
\end{align*}
Besides, whenever operator $A$ is bounded, it holds
\begin{align*}
\lVert Au - A  u^* \rVert^2 + \lVert u -  u^* \rVert^2
\le (\lVert A\rVert^2 +1) \lVert u-  u^* \rVert^2.
\end{align*}
Then, the proximal operator assures the couple $(\Pi_C ( u^*), A \Pi_C ( u^*))$ isn't too far from $( u^*, A u^*)$.
This leads to the idea of replacing the projection $\Pi_{\ddot C}$ in the process \eqref{preSI} by a simpler approximation
\begin{align}
\begin{cases}
u_{n+1} = \Pi_C (u_n - \beta \widetilde u_n)\\
q_{n+1} = A u_{n+1}
\end{cases}
\label{preProj}
\end{align}
with $\widetilde u_n$ being the proximal point determined by
\begin{align}
\widetilde u_n = \prox_{A,p_{n+1}}(F'(u_n))= (A^*A + I)^{-1}(A^* p_{n+1} +F'(u_n)).\label{utilde}
\end{align}
Therefore, we introduce a semi-implicit scheme:
\begin{align}
\begin{cases}
p_{n+1} = \Pi_K ( p_n +\alpha (A\ov u_n - G'(p_n))\\
u_{n+1} = \Pi_C (u_n - \beta (A^*A + I)^{-1}(A^* p_{n+1} +F'(u_n)) )\\
\ov u_{n+1} = 2u_{n+1} -u_n
\end{cases}\label{AlgI}
\end{align}

\begin{remark}
If $C=V$, replacing the projection $\Pi_{\ddot C}$ in the process \eqref{preSI} by \eqref{preProj} is straightforward. Ortherwise, we notice that once $A$ is an isometric operator, i.e. $\lVert A u \rVert= \lVert u \rVert$, $\forall u\in \dom(A)$, this replacement is clearly equivalent. Then, in other words, the expression \eqref{preProj} defines a projector on $\ddot C$ which naturally coincides with the projector $\Pi_{\ddot C}$. In addition, if $A$ is an orthogonal operator, in this case, $A u_{n+1}$ is indeed the projection of $A (u_n - \beta \widetilde u_n)$ on the image $A C$, which will be shown in Lemma \ref{ImpProj}.
\end{remark}

\bigskip

\begin{lemma}
\label{ImpProj}
Let $O$ be a closed convex subset of Hilbert space $V$ such that $0\in O$ and $\Pi_O$ be the orthogonal  projector on $O$. For every densely defined, closed, linear operator $A: V \to W$ satisfying 
\begin{align}
\forall O,\quad \forall u \in V,\quad  \langle A(u - \Pi_O(u)), A \Pi_O(u) \rangle \ge 0,\label{Ha}
\end{align}
the process \eqref{preProj} is identified with
\begin{align}
\begin{cases}
u_{n+1} = \Pi_C (u_n - \beta \widetilde u_n)\\
q_{n+1} = \Pi_{A C}( A u_n - \beta A \widetilde u_n).
\end{cases}
\label{ProjP}
\end{align}
\end{lemma}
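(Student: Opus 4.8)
The plan is to observe first that the $u$-update in \eqref{preProj} and in \eqref{ProjP} is one and the same, namely $u_{n+1}=\Pi_C(u_n-\beta\widetilde u_n)$, so the whole lemma collapses to the single identity $Au_{n+1}=\Pi_{AC}(Au_n-\beta A\widetilde u_n)$, to be established at each step $n$. Put $u:=u_n-\beta\widetilde u_n$ and $\hat u:=\Pi_C(u)=u_{n+1}$; linearity of $A$ gives $Au_n-\beta A\widetilde u_n=Au$, so the target becomes $A\hat u=\Pi_{AC}(Au)$. (One should note $u\in\dom A$: indeed $u_n\in C\subset\dom A$, while $\widetilde u_n$ from \eqref{utilde} is the minimiser of $v\mapsto\tfrac12(\lVert Av-p_{n+1}\rVert^2+\lVert v-F'(u_n)\rVert^2)$ and hence lies in $\dom(A^*A)\subset\dom A$; this is precisely where the hypotheses ``densely defined'' and ``closed'' on $A$ enter, through the everywhere-defined bounded inverse of $A^*A+I$.)

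To obtain $A\hat u=\Pi_{AC}(Au)$ I would verify the variational characterisation \eqref{ChPi} of the orthogonal projection onto the convex set $AC$. Since $\hat u\in C$, already $A\hat u\in AC$, so it remains to prove $\langle Au-A\hat u,\,w-A\hat u\rangle\le 0$ for every $w\in AC$, i.e. for every $w=Av_0$ with $v_0\in C$. The one real idea is to apply hypothesis \eqref{Ha} not to $C$ but to the translated convex set $O:=C-v_0$, which contains $0$, at the translated point $u-v_0$: by the translation property \eqref{trPi}, $\Pi_{C-v_0}(u-v_0)=\Pi_C(u)-v_0=\hat u-v_0$, so \eqref{Ha} yields $\langle A\bigl((u-v_0)-(\hat u-v_0)\bigr),\,A(\hat u-v_0)\rangle\ge 0$, that is $\langle A(u-\hat u),\,A\hat u-Av_0\rangle\ge 0$, i.e. exactly $\langle Au-A\hat u,\,Av_0-A\hat u\rangle\le 0$. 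As $v_0$ ranges over $C$ this is the required variational inequality, and since at most one point of $AC$ can satisfy it, that point is $\Pi_{AC}(Au)$; hence $q_{n+1}=Au_{n+1}=\Pi_{AC}(Au_n-\beta A\widetilde u_n)$ and \eqref{preProj} is identified with \eqref{ProjP}.

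All of this is routine inner-product algebra once the right set is plugged into \eqref{Ha}; the substantive move, and the sole use of hypothesis \eqref{Ha}, is the translation $O=C-v_0$ together with \eqref{trPi}, which places $\Pi_{C-v_0}(u-v_0)$ exactly at $\hat u-v_0$. I expect the main obstacle not to be this argument itself but the surrounding well-definedness bookkeeping for an unbounded $A$: recording that $\widetilde u_n\in\dom A$ (so that $Au$ makes sense), that $AC$ is convex, and that $AC$ need not be closed yet still carries at most one solution of the variational inequality, so that the symbol $\Pi_{AC}(Au)$ is unambiguous and the identification of the two schemes is exact.
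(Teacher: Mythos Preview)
Your argument is correct and follows essentially the same route as the paper: both translate to $O=C-v_0$ so that $0\in O$, invoke \eqref{trPi} to identify $\Pi_{C-v_0}(u-v_0)=\hat u-v_0$, and then apply hypothesis \eqref{Ha} to obtain the variational inequality characterising $\Pi_{AC}(Au)$. The only notable difference is that the paper asserts $AC$ is closed (attributing this to the closedness of $A$), whereas you more cautiously argue directly that the variational inequality has at most one solution, which is all that is needed here.
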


\begin{proof}
By making use of notation \eqref{trPi}, it is easy to verify that
\begin{align*}
u_{n+1} = \Pi_C (u_n - \beta \widetilde u_n) \quad \Longleftrightarrow  \quad u_{n+1} - u_0 = \Pi_{C-u_0} (u_n - u_0 - \beta \widetilde u_n), \; \forall u_0\in C.
\end{align*}
Since $0$ is always in $C-u_0$, and by dealing with the characterization of the projection $\Pi_{C-u_0}$, we derive that for all $u_0 \in C$,
\begin{align*}
\langle u_n - u_0 - \beta \widetilde u_n - (u_{n+1} - u_0), u_0 -  u_{n+1} \rangle \le 0.
\end{align*}
We deduce from the hypothesis \eqref{Ha} on the operator $A$ that
\begin{align}
\forall u_0 \in C, \quad \langle A u_n - \beta A \widetilde u_n - A u_{n+1}, A u_0 - A u_{n+1} \rangle \le 0.
\label{PiAc}
\end{align}
$AC$ is closed convex. {Convexity is preserved by linearity and closedness is preserved by closedness of $A$.}
The inequality \eqref{PiAc} well defines a projection on $AC$,
\begin{align*}
A u_{n+1} = \Pi_{A C} (A u_n - \beta A \widetilde u_n).
\end{align*}
If $q_{n+1}$ is a projection of $Au_n - \beta A\widetilde u_n$ on $A C$ then by the uniqueness of projection, $q_{n+1}$ must coincide with $Au_{n+1}$.
The proof is completed.
\end{proof}

\begin{remark}
When $A$ is an orthogonal operator, it evidently satisfies the hypothesis \eqref{Ha}. At the moment, keeping in mind that $A^*A=I$, it gives an identification
\begin{align*}
\langle A(u - \Pi_O(u)), A \Pi_O(u) \rangle = \langle A^*A(u - \Pi_O(u)), \Pi_O(u) \rangle =\langle u - \Pi_O(u), \Pi_O(u) \rangle.
\end{align*}
\end{remark}

\bigskip

In practice, $A$ usually stands for gradient operator $\nabla$. Let us show that, in this case, gradient operator satisfies the hypothesis \eqref{Ha}. In the  following lemma, we regard $u$ as a function of variable $x$ in a suitable space, for example $u\in V= L^2(\Omega)$. We say that $u$ has local constraints in the convex $C$ if $u(x)\in C(x)$ for all $x\in\Omega$.

\bigskip
\begin{lemma} Let $O$ be a closed convex subset of Hilbert space $V$ such that $0\in O$ and $\Pi_O$ be the orthogonal  projector on $O$. We suppose, in addition, that the projection $\Pi_O$ is local in the sense that $\mathring u = \Pi_O (u)$ is equivalent to $\mathring u(x) = \Pi_{O(x)} (u(x)), \forall x\in \Omega$.
Then, the gradient operator $\nabla$ satisfies the hypothesis \eqref{Ha}.
\end{lemma}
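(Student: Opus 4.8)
The plan is to reduce \eqref{Ha} for $A=\nabla$, namely the inequality $\int_\Omega \nabla\big(u-\Pi_O(u)\big)\cdot\nabla\Pi_O(u)\,dx\ge 0$, to a \emph{pointwise} statement by means of difference quotients, and then to invoke the elementary monotonicity of the fibre projections, which is exactly what the locality hypothesis supplies. Set $v:=\Pi_O(u)$ and $w:=u-v$, so that by locality $v(x)=\Pi_{O(x)}(u(x))$ and $w(x)=u(x)-v(x)$ for a.e.\ $x\in\Omega$; we work with $u\in H^1(\Omega)$ (otherwise the quantities in \eqref{Ha} are not defined), and, using locality together with the $1$-Lipschitz continuity of each $\Pi_{O(x)}$, one checks that $v$, hence $w$, again lie in $H^1(\Omega)$.

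First I would prove the pointwise inequality
\[
\langle\, w(y)-w(x)\,,\ v(y)-v(x)\,\rangle\ \ge\ 0\qquad\text{for a.e.\ }x,y\in\Omega,
\]
which is the familiar fact that, for a fixed closed convex set, $\Pi$ and $\mathrm{id}-\Pi$ are complementary monotone maps. Indeed, applying the characterization \eqref{ChPi} of $v(x)=\Pi_{O(x)}(u(x))$ with test point $v(y)$, and the characterization of $v(y)$ with test point $v(x)$, and combining the two, one gets precisely the displayed inequality. The only point to verify is that the test points are admissible, i.e.\ $v(y)\in O(x)$ and $v(x)\in O(y)$; this holds automatically when the fibres $O(\cdot)$ do not depend on the point — which is the situation in all the applications (pointwise bound/obstacle constraints such as $u(x)\in[a,b]$ or $u(x)\ge 0$) — and it is the only genuinely delicate point of the argument.

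Then I would pass to the gradient level by difference quotients. Fix a coordinate direction $e_k$, an open set $\Omega'\Subset\Omega$, and $t>0$ small enough that $x+te_k\in\Omega$ for all $x\in\Omega'$, and set $D_tf(x):=t^{-1}\big(f(x+te_k)-f(x)\big)$. By the pointwise inequality above, the integrand is nonnegative, so $\int_{\Omega'}\langle D_tw,D_tv\rangle\,dx\ge 0$. Since $w,v\in H^1(\Omega)$ we have $D_tw\to\partial_kw$ and $D_tv\to\partial_kv$ strongly in $L^2(\Omega')$ as $t\to0^+$, and a product of two strongly $L^2$-convergent sequences converges in $L^1$; hence $\int_{\Omega'}\langle\partial_kw,\partial_kv\rangle\,dx\ge0$. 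Summing over $k=1,\dots,N$ and then letting $\Omega'\uparrow\Omega$ (dominated convergence, dominating function $|\nabla w\cdot\nabla v|\in L^1(\Omega)$) yields $\int_\Omega\nabla w\cdot\nabla v\,dx\ge0$, which is exactly \eqref{Ha} for $\nabla$.

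The main obstacle is the admissibility point flagged above: the characterization of the projection at $x$ only tests against points of $O(x)$, so inserting $v(y)$ requires $v(y)\in O(x)$. When the constraint decouples into one convex set acting at every point — the relevant case in practice, including the torsion-rod example — this is immediate and the proof is complete; more generally one needs a compatibility (e.g.\ monotonicity) assumption on the family $\{O(x)\}$. An equivalent route carrying out the same computation at once is the Sobolev chain rule $\partial_kv=D\Pi_{\mathcal O}(u)\,\partial_ku$ for the Lipschitz map $\Pi_{\mathcal O}$, combined with the fact that $D\Pi_{\mathcal O}(u)$ is a.e.\ a symmetric operator with spectrum in $[0,1]$ — since $\Pi_{\mathcal O}=\nabla\big(\tfrac12\|\cdot\|^2-\tfrac12\dist(\cdot,\mathcal O)^2\big)$ is the gradient of a convex function with $1$-Lipschitz gradient — so that $\langle(\mathrm{id}-D\Pi_{\mathcal O}(u))\xi,\,D\Pi_{\mathcal O}(u)\xi\rangle\ge0$ for every $\xi$; taking $\xi=\partial_ku$ and integrating gives the claim.
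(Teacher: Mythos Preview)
Your approach is essentially the same as the paper's: both proofs derive the inequality from the pointwise firm nonexpansiveness of the fibre projections, applied to $u(x)$ and its translate $u(x+th)$, and then pass to derivatives. The paper does this by writing the pointwise inequality, dividing by $t^2$, and invoking the Gateaux derivative directly at each $x$; you integrate first and pass to the limit via $L^2$-convergence of difference quotients, which is the technically cleaner route for $H^1$ functions.

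One point worth noting: you correctly flag the admissibility issue --- inserting $v(y)$ into the characterization at $x$ requires $v(y)\in O(x)$, which amounts to constant (or at least compatible) fibres. The paper's proof silently makes the same substitution, writing $\mathring v(x)=\mathring u(x+th)$, which likewise presupposes $O(x)=O(x+th)$; so you are not introducing a new restriction but rather surfacing one already implicit in the paper. Your alternative route via the chain rule for the Lipschitz map $\Pi_{\mathcal O}$ and the spectral bound on $D\Pi_{\mathcal O}$ is a nice, self-contained variant that sidesteps the difference-quotient bookkeeping.
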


\begin{proof}
For every $u,v\in V$, let $\mathring u$, $\mathring v$ be the projections of $u,v$ on $O$, respectively. By using the inequality \eqref{Pmono}, we have that 
\begin{align*}
\langle u - v - (\mathring u- \mathring v), \mathring u - \mathring v \rangle \ge 0.
\end{align*}
Since the projection $\Pi_O$ is realized locally, choosing $v\in V$ such that $v(x)=u(x+th)$ for $t > 0$ and some given $h \in \RR^N$, we have
\begin{align*}
\frac{1}{t^2}
\langle u(x) - u(x +th) - [\mathring u(x) - \mathring u(x +th)], \mathring u(x) - \mathring u(x +th) \rangle \ge 0.
\end{align*}
We recall that the Gateaux derivative of $u$ is defined by
\begin{align*}
\langle \nabla u(x), h \rangle : = \lim_{t\to 0^+} \frac{u(x+th)-u(x)}{t}.
\end{align*}
By passing to limit as $t\to 0^+$, we obtain
\begin{align*}
\langle \nabla u(x)- \nabla\mathring u(x), \nabla\mathring u(x) \rangle \ge 0.
\end{align*}
In other words, we get
\begin{align*}
\langle \nabla u -  \nabla \Pi_O(u) , \nabla \Pi_O(u) \rangle \ge 0.
\end{align*}
This completes the proof of lemma.
\end{proof}

\bigskip
It is ready to prove the convergence of semi-implicit scheme proposed in \eqref{AlgI}. Here are the main result:

\bigskip
\begin{theorem}
Let $A: V \to W$ be a densely defined, closed linear operator satisfied the hypothesis \eqref{Ha}. For all $\alpha, \beta$ such that 
\begin{align*}
0<\alpha <\frac{2}{L_g} ,\qquad
0 <\beta < \frac{2}{L_f} ,\qquad
\alpha \beta + \frac{\alpha L_g}{2} < 1,
\end{align*}
the iterative process defined by \eqref{AlgI} converges to a saddle point of $L(u,p)$ in the set $C\times K$.
\end{theorem}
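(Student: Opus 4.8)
The plan is to realise the semi-implicit scheme \eqref{AlgI} as the explicit scheme \eqref{AlgG} applied to the lifted saddle point problem in the variable $(u,q)=(u,Au)\in\ddot C$, whose bilinear coupling $\langle q,p\rangle$ has operator norm $1$, and then to re-run the energy argument of Theorem \ref{Eth} while carrying the auxiliary sequence $q_n:=Au_n$ alongside $u_n$. Since $A$ satisfies the hypothesis \eqref{Ha}, Lemma \ref{ImpProj} applies: with $\widetilde u_n$ given by \eqref{utilde}, the $u$-step of \eqref{AlgI} is equivalent to the pair $u_{n+1}=\Pi_C(u_n-\beta\widetilde u_n)$, $q_{n+1}=\Pi_{AC}(q_n-\beta A\widetilde u_n)$, with $q_{n+1}=Au_{n+1}$; setting $\ov q_n:=2q_n-q_{n-1}=A\ov u_n$, the $p$-step reads $p_{n+1}=\Pi_K(p_n+\alpha(\ov q_n-G'(p_n)))$. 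Thus \eqref{AlgI} is the scheme \eqref{AlgG} for the Lagrangian $\langle q,p\rangle+F(u)-G(p)$ on $\ddot C\times K$, with the one deviation that its $(u,q)$-step uses the product projector $\Pi_C\times\Pi_{AC}$ (which, by Lemma \ref{ImpProj}, still takes values in $\ddot C$) in place of $\Pi_{\ddot C}$; this is why the statement cannot simply be quoted from Theorem \ref{Eth} and the estimate must be redone.

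Fix a saddle point $(\hat u,\hat p)$, set $\hat q:=A\hat u$, keep the notation \eqref{nota1}, and write in addition $Q_n:=q_n-\hat q=AU_n$ and $\ov Q_n:=\ov q_n-\hat q=2Q_n-Q_{n-1}$. Exactly as in the proof of Theorem \ref{Eth}, applying \eqref{Pythagorean} to the $p$-step (translated by $\hat p$ through \eqref{trPi}, using $0\in\widetilde K$) together with \eqref{chaP} and \eqref{dg2} gives
\[
\lVert P_{n+1}\rVert^2\le\lVert P_n\rVert^2-\lVert P_{n+1}-P_n\rVert^2+2\alpha\langle\ov Q_n-g_nP_n,P_{n+1}\rangle .
\]
For the $(u,q)$-step I apply \eqref{Pythagorean} twice — on $\widetilde C$ with the point $U_n-\beta\widetilde u_n$, and on $A\widetilde C$ with $Q_n-\beta A\widetilde u_n$ (both contain $0$, and the $q$-part is legitimate because, by Lemma \ref{ImpProj}, $q_{n+1}$ really is $\Pi_{AC}(q_n-\beta A\widetilde u_n)$) — add the two inequalities, and use the identity $\langle\widetilde u_n,U_{n+1}\rangle+\langle A\widetilde u_n,Q_{n+1}\rangle=\langle(A^*A+I)\widetilde u_n,U_{n+1}\rangle=\langle A^*p_{n+1}+F'(u_n),U_{n+1}\rangle$, which holds by \eqref{utilde} and $Q_{n+1}=AU_{n+1}$; feeding in \eqref{chaU} and \eqref{df2} then yields
\[
\lVert U_{n+1}\rVert^2+\lVert Q_{n+1}\rVert^2\le\lVert U_n\rVert^2+\lVert Q_n\rVert^2-\lVert U_{n+1}-U_n\rVert^2-\lVert Q_{n+1}-Q_n\rVert^2-2\beta\langle A^*P_{n+1}+f_nU_n,U_{n+1}\rangle .
\]

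Put $S_n:=\alpha(\lVert U_n\rVert^2+\lVert Q_n\rVert^2)+\beta\lVert P_n\rVert^2$ and $R_n:=\alpha(\lVert U_{n+1}-U_n\rVert^2+\lVert Q_{n+1}-Q_n\rVert^2)+\beta\lVert P_{n+1}-P_n\rVert^2$. Weighting the two estimates by $\alpha$ and $\beta$, adding, and using $\langle A^*P_{n+1},U_{n+1}\rangle=\langle P_{n+1},Q_{n+1}\rangle$ reproduces the analogue of \eqref{Sne}:
\[
S_{n+1}\le S_n-R_n-2\alpha\beta\bigl(\langle f_nU_n,U_{n+1}\rangle+\langle g_nP_n,P_{n+1}\rangle+\langle Q_{n+1}-\ov Q_n,P_{n+1}\rangle\bigr).
\]
From here the argument of Theorem \ref{Eth} goes through almost verbatim: bound $-\langle f_nU_n,U_{n+1}\rangle$ and $-\langle g_nP_n,P_{n+1}\rangle$ by \eqref{estimF}--\eqref{estimG}, write $Q_{n+1}-\ov Q_n=(Q_{n+1}-Q_n)-(Q_n-Q_{n-1})$, split off the telescoping term $\langle Q_{n+1}-Q_n,P_{n+1}\rangle$, and treat $\langle Q_n-Q_{n-1},P_{n+1}-P_n\rangle$ with $2ab\le\delta a^2+\delta^{-1}b^2$. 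The one structural gain over Theorem \ref{Eth} is that $Q_{n+1}-Q_n=A(U_{n+1}-U_n)$ now occurs \emph{as such}, so it is absorbed by the term $\alpha\lVert Q_{n+1}-Q_n\rVert^2$ already carried in $R_n$ and no factor $\lVert A\rVert$ ever enters. Summing from $N$ to $M$ and controlling the surviving endpoint term $\langle Q_{M+1}-Q_M,P_{M+1}\rangle$ by a second parameter $\gamma$, one reaches an inequality of the shape \eqref{mainconv}; the requirement that every coefficient on its left-hand side be nonnegative reduces to $0<\beta<\frac2{L_f}$ together with the solvability of $\frac{2\alpha}{2-\alpha L_g}<\delta<\frac1\beta$ and $\alpha<\gamma<\frac1\beta$, i.e. to the three hypotheses $0<\alpha<\frac2{L_g}$, $0<\beta<\frac2{L_f}$, $\alpha\beta+\frac{\alpha L_g}{2}<1$ of the theorem.

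With these parameters $\{U_n\}$, $\{Q_n\}$, $\{P_n\}$ are bounded and $\lVert U_{n+1}-U_n\rVert$, $\lVert Q_{n+1}-Q_n\rVert$, $\lVert P_{n+1}-P_n\rVert\to0$; extracting a subsequence $(u_{n_k},p_{n_k})\rightharpoonup(u_*,p_*)\in C\times K$ — with $q_{n_k}\rightharpoonup Au_*$ because the graph of the closed operator $A$ is weakly closed — and passing to the limit in the rewritten scheme shows $(u_*,p_*)$ is a saddle point; re-running the estimate with $(\hat u,\hat p)=(u_*,p_*)$ and $N=n_k$ then forces $(U_M,P_M)\to0$, just as at the end of the proof of Theorem \ref{Eth}. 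The main obstacle is the middle step: one has to verify that Lemma \ref{ImpProj} together with \eqref{Ha} genuinely turns the implicit $q$-update into an orthogonal projection onto $AC$ (so that \eqref{Pythagorean} may be used for it) and that the resolvent $(A^*A+I)^{-1}$ is exactly what makes the two inner-product contributions recombine into $\langle A^*p_{n+1}+F'(u_n),U_{n+1}\rangle$; once that bookkeeping is done, what remains is the computation of Theorem \ref{Eth} with $\lVert A\rVert$ replaced by $1$ and with the additional $Q$-difference terms supplying the slack that removes any dependence on $\lVert A\rVert$.
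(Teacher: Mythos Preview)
Your proposal is correct and follows essentially the same route as the paper's proof: both exploit Lemma \ref{ImpProj} (via hypothesis \eqref{Ha}) to treat $q_{n+1}=Au_{n+1}$ as the projection $\Pi_{A\widetilde C}(Q_n-\beta A\widetilde u_n)$, apply \eqref{Pythagorean} separately to $U_{n+1}$ and $Q_{n+1}$, combine the inner products through the identity $\langle\widetilde u_n,U_{n+1}\rangle+\langle A\widetilde u_n,AU_{n+1}\rangle=\langle A^*p_{n+1}+F'(u_n),U_{n+1}\rangle$, and then rerun the energy estimate of Theorem \ref{Eth} with the $Q$-differences absorbing what $\lVert A\rVert$ used to cost. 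Your opening interpretation of \eqref{AlgI} as the explicit scheme on the lifted Lagrangian $\langle q,p\rangle+F(u)-G(p)$ over $\ddot C\times K$ (with the product projector replacing $\Pi_{\ddot C}$) is a helpful conceptual gloss that the paper leaves implicit, but the analytic content is the same.
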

\bigskip
\begin{proof}
We maintain using the notation \eqref{nota1} and introduce some more notation:
\begin{align*}
\widetilde q_n = A \widetilde u_n, \quad q_n = A u_n,\quad Q_n = A U_n, \quad \ov Q_n = A\ov U_n.
\end{align*}
Under the above settings and passing the translation of projection \eqref{trPi}, we can rewrite the process \eqref{AlgI} as
\begin{align*}
\begin{cases}
P_{n+1} = \Pi_{\widetilde K} (P_n +\alpha (\ov q_n - G'(p_n)))\\
U_{n+1} = \Pi_{\widetilde C} (U_n - \beta \widetilde u_n)\\
q_{n+1} = A u_{n+1}\\
\ov q_{n+1} = 2q_{n+1} -q_n.
\end{cases}
\end{align*}
We manipulate again the property of projection  \eqref{Pythagorean}, the characterization \eqref{chaP} and the representation \eqref{dg2} to obtain that
\begin{align*}
\lVert P_{n+1} \rVert ^2 
&\le \lVert P_n +\alpha (\ov q_n - G'(p_n)) \rVert^2 - \lVert P_{n+1} -  P_n -\alpha (\ov q_n - G'(p_n) ) \rVert^2 \\
&\le \lVert P_n \rVert^2 - \lVert P_{n+1} - P_n \rVert^2 + 2\alpha \langle  \ov Q_n - g_n P_n, P_{n+1} \rangle
\end{align*}
and similarly,
\begin{align*}
\lVert U_{n+1} \rVert ^2 
&\le \lVert U_n - \beta \widetilde u_n \rVert^2 - \lVert U_{n+1} -  U_n + \beta \widetilde u_n \rVert^2 \\
&\le \lVert U_n \rVert^2 - \lVert U_{n+1} - U_n \rVert^2 - 2\beta \langle \widetilde u_n, U_{n+1} \rangle.
\end{align*}
Within the spirit of Lemma \eqref{ImpProj}, $Q_{n+1}$ is indeed the projection of $(Q_n -\beta \widetilde q_n)$ on $A \widetilde C$. We then have
\begin{align*}
\lVert Q_{n+1} \rVert ^2 
&\le \lVert Q_n - \beta \widetilde q_n \rVert^2 - \lVert Q_{n+1} -  Q_n + \beta \widetilde q_n \rVert^2 \\
&\le  \lVert Q_n \rVert^2 - \lVert Q_{n+1} - Q_n \rVert^2 - 2\beta \langle \widetilde q_n, Q_{n+1} \rangle
\end{align*}
We denote that
\begin{align*}
S_n &= \alpha (\lVert U_n\rVert^2  +\lVert Q_n\rVert^2) + \beta \lVert P_n \rVert^2 \\
R_n &= \alpha (\lVert U_{n+1} - U_n \rVert^2 + \lVert Q_{n+1} - Q_n \rVert^2) + \beta \lVert P_{n+1} - P_n \rVert^2
\end{align*}
Then, it holds
\begin{align}
S_{n+1} \le S_n -R_n -2\alpha\beta \left( \langle  \widetilde u_n, (A^*A + I)U_{n+1} \rangle - \langle  \ov Q_n - g_n P_n, P_{n+1} \rangle \right). \label{Sn}
\end{align}
Keeping in mind that $\widetilde u_n = (A^*A +I)^{-1}(A^*p_{n+1} + F'(u_n))$, $(A^*A +I)$ is self-adjoint, and use the inequality \eqref{chaU} and the representation \eqref{df2}, we get
\begin{align*}
\langle \widetilde u, (A^*A + I)U_{n+1} \rangle 
= & \langle (A^*A +I)^{-1}(A^*p_{n+1} + F'(u_n)), (A^*A + I)U_{n+1} \rangle \\
= & \langle A^*p_{n+1} + F'(u_n), U_{n+1} \rangle \\
\ge & \langle A^* P_{n+1} + f_n U_n, U_{n+1} \rangle.
\end{align*}
Besides, we derive that
\begin{align*}
& \langle A^* P_{n+1} + f_n U_n, U_{n+1} \rangle - \langle  \ov Q_n - g_n P_n, P_{n+1} \rangle \\
= & \langle  f_n U_n, U_{n+1} \rangle +\langle g_nP_n, P_{n+1}\rangle - \langle \ov Q_n -  Q_{n+1}, P_{n+1}\rangle \\
=& \langle M_{f_n} U_n, M_{f_n} U_{n+1} \rangle +\langle M_{g_n} P_n, M_{g_n} P_{n+1}\rangle \\
& + \langle Q_{n+1} - Q_n, P_{n+1}\rangle - \langle Q_n -  Q_{n-1}, P_n\rangle 
- \langle Q_n -  Q_{n-1}, P_{n+1} - P_n\rangle \\
\ge & -\frac{L_f}{4}\lVert U_{n+1} - U_n \rVert^2 - \frac{L_g}{4}\lVert P_{n+1} - P_n \rVert^2 \\
& + \langle Q_{n+1} - Q_n, P_{n+1}\rangle - \langle Q_n -  Q_{n-1}, P_n\rangle 
- \lVert Q_n -  Q_{n-1} \rVert \lVert P_{n+1} - P_n\rVert
\end{align*}
In the inequality above, we reused the estimates \eqref{estimF}-\eqref{estimG} in the proof of congvergence of explicit scheme.
We recall that $2ab \le (\delta a^2 + b^2/\delta)$ for any $a,b$ and any $\delta >0$. Therefore, for every $\delta >0$, it follows
\begin{align*}
S_{n+1}
\le  S_n & - \alpha\lVert Q_{n+1} - Q_n \rVert^2 -\alpha  \left(1- \frac{\beta L_f}{2} \right) \lVert U_{n+1} - U_n \rVert^2 \\
& - \beta  \left(1- \frac{\alpha L_g}{2}  \right)\lVert P_{n+1} - P_n \rVert^2 \\
&-2\alpha\beta \left[ \langle Q_{n+1} - Q_n, P_{n+1}\rangle - \langle Q_n -  Q_{n-1}, P_n\rangle \right] \\
&+ 2\alpha\beta  \Vert Q_n - Q_{n-1} \rVert \lVert P_{n+1} - P_n \rVert,
\end{align*}
and then
\begin{align*}
S_{n+1} \le  S_n & - \alpha\lVert Q_{n+1} - Q_n \rVert^2 -\alpha  \left(1- \frac{\beta L_f}{2} \right) \lVert U_{n+1} - U_n \rVert^2  \\
& - \beta \left(1- \frac{\alpha L_g}{2} \right)\lVert P_{n+1} - P_n \rVert^2\\
&-2\alpha\beta \left[ \langle Q_{n+1} - Q_n, P_{n+1}\rangle - \langle Q_n -  Q_{n-1}, P_n\rangle \right] \\
&+ \alpha\beta \left( \delta\Vert Q_n - Q_{n-1} \rVert^2 + \frac{1}{\delta}\lVert P_{n+1} - P_n \rVert^2 \right).
\end{align*}
Let $M\ge N \ge 1$ and $\gamma >0$. We take the sum of the inequality \eqref{Sn} from $N$ up to $M$ in order to obtain
\begin{align*}
S_{M+1} \le S_{N} & -\alpha\sum_{n=N}^{M} \lVert Q_{n+1} - Q_n \rVert^2 \\
& -\alpha \left(1- \frac{\beta L_f}{2} \right)\sum_{n=N}^{M} \lVert U_{n+1} - U_n \rVert^2 - \beta \left(1- \frac{\alpha L_g}{2}  \right) \sum_{n=N}^{M} \lVert P_{n+1} - P_n \rVert^2 \\
& -2\alpha\beta \left[ \langle Q_{M+1} - Q_M, P_{M+1}\rangle - \langle Q_N -  Q_{N-1}, P_N\rangle \right] \\
&+ \alpha\beta \delta \sum_{n=N-1}^{M-1} \Vert Q_{n+1} - Q_n \rVert^2 + \frac{\alpha\beta}{\delta} \sum_{n=N}^{M} \lVert P_{n+1} - P_n \rVert^2,
\end{align*}
which leads to
\begin{align*}
S_{M+1}
\le S_{N} & -\alpha (1 - \beta\delta)\sum_{n=N}^{M-1} \lVert Q_{n+1} - Q_n \rVert^2 
-\alpha \left(1- \frac{\beta L_f}{2} \right)\sum_{n=N}^{M} \lVert U_{n+1} - U_n \rVert^2\\
& - \beta \left(1- \frac{\alpha L_g}{2} -\frac{\alpha}{\delta} \right) \sum_{n=N}^{M} \lVert P_{n+1} - P_n \rVert^2 \\
&-\alpha\lVert Q_{M+1} - Q_M \rVert^2 +  \alpha\beta \delta \Vert Q_N - Q_{N-1} \rVert^2 \\
& +2\alpha\beta \langle Q_N -  Q_{N-1}, P_N\rangle \\ 
& +\alpha\beta \left( \gamma\Vert Q_{M+1} -Q_{M} \rVert^2 + \frac{1}{\gamma}\lVert P_{M+1} \rVert^2 \right).
\end{align*}
After rearranging, it turns out immediately that
\begin{align}
\begin{aligned}
\alpha \lVert Q_{M+1} \rVert^2 
&+ \alpha \lVert U_{M+1} \rVert^2 + \beta \left(1- \frac{\alpha}{\gamma} \right) \lVert P_{M+1} \rVert^2 \\ 
&+\alpha(1-\beta \delta)\sum_{n=N}^{M-1} \lVert Q_{n+1} - Q_n \rVert^2\\
&+\alpha \left(1- \frac{\beta L_f}{2} \right)\sum_{n=N}^{M} \lVert U_{n+1} - U_n \rVert^2\\
&+ \beta \left(1- \frac{\alpha L_g}{2} -\frac{\alpha }{\delta} \right) \sum_{n=N}^{M} \lVert P_{n+1} - P_n \rVert^2 \\
&+\alpha(1-\beta \gamma)\lVert Q_{M+1} - Q_M \rVert^2 \\
\le S_{N} &
+  \alpha\beta  \delta \Vert Q_N -Q_{N-1} \rVert^2 +2\alpha\beta \langle Q_N -  Q_{N-1}, P_N\rangle.
\end{aligned}
\label{mainConvI}
\end{align}
We extract all the coefficients in the left-hand side of the inequality \eqref{mainConvI} and regard that for any $\alpha, \beta, \delta, \gamma$ being positive parameters, the following holds true
\begin{align}
\begin{cases}
\beta \left(1- \frac{\alpha}{\gamma} \right) > 0 \\
\alpha(1-\beta \delta) > 0 \\
\alpha \left(1- \frac{\beta L_f}{2} \right) > 0 \\
\beta \left(1- \frac{\alpha L_g}{2} -\frac{\alpha}{\delta} \right) > 0 \\
\alpha(1-\beta \gamma) > 0
\end{cases}
\quad
\Longleftrightarrow
\quad
\begin{cases}
\beta < \frac{2}{L_f} \\
\alpha < \gamma < \frac{1}{\beta}\\
\beta < \frac{1}{\delta } <  \frac{2- \alpha L_g}{2\alpha }.
\end{cases}\label{IntConvI}
\end{align}
We then derive from the right-hand side of \eqref{IntConvI} that
\begin{align}
0<\alpha <\frac{2}{L_g}, \qquad
0 <\beta < \frac{2}{L_f}, \qquad
\alpha \beta + \frac{\alpha L_g}{2} < 1.
\label{paramConvI}
\end{align}
So, if we choose the couple $(\alpha,\beta)$ as in \eqref{paramConvI} and $(\gamma,\delta)$ in the intervals defined by the right-hand side of \eqref{IntConvI} then the left-hand side of \eqref{mainConvI} is positive. We deduce that the sequences $\lVert U_{M+1} \rVert$, $\lVert Q_{M+1} \rVert$, $\lVert P_{M+1} \rVert$ are bounded while $\lVert U_{M+1} - U_M \rVert$, $\lVert Q_{M+1} - Q_M \rVert$ and $\lVert P_{M+1} - P_M \rVert$ must converge to 0 as $M\to +\infty$. We then have the same conclusion for the sequences $\lVert u_{M+1} \rVert$, $\lVert q_{M+1} \rVert$, $\lVert p_{M+1} \rVert$, $\lVert u_{M+1} - u_M \rVert$, $\lVert q_{M+1} - q_M \rVert$ and $\lVert p_{M+1} - p_M \rVert$, respectively. Therefore, the sequences $\{ u_{M+1}\} $, $\{ q_{M+1} \}$, $\{ p_{M+1} \}$ have subsequences converging in weak topology. Let say $u_*$, $q_*$, and $p_*$ are corresponding limits. By substituting $\hat u= u_*$, $\hat q = q_*$, $\hat p = p_*$, and handling again the inequality \eqref{mainConvI} we can derive that sequences $\{ u_M \}$, $\{q_M\}$, $\{ p_M \}$ are indeed Cauchy sequences, hence converge to the limits $u_*$, $q_*$, and $p_*$, respectively. By passing to the limit in \eqref{AlgI} and  \eqref{ProjP}, we have
\begin{align*}
p_* & = \Pi_K ( p_* +\alpha (Au_*- G'(p_*))\\
u_* & = \Pi_C (u_* - \beta \widetilde u_* )\\
A u_* & = \Pi_{A C}( A u_* - \beta A \widetilde u_*)
\end{align*}
where $\widetilde u_*$ is the proximal point $\prox_{A,p_*}(F'(u_*))$ (see \eqref{utilde}), that is
\begin{align*}
 \widetilde u_* = (A^*A + I)^{-1}(A^* p_* + F'(u_*)).
\end{align*}
It is easy to see that for every $u\in C$
\begin{align*}
0 \le \langle \widetilde u_*, u - u_* \rangle + \langle A \widetilde u_* , A (u-u_*) \rangle = \langle (A^* A +I) \widetilde u_*, u - u_* \rangle.
\end{align*}
We deduce that
\begin{align*}
\langle A u_* - G'(p_*) , p - p_* \rangle & \le 0 \quad \forall p\in K, \\
\langle A^* p_* + F'(u_*),u - u_* \rangle & \ge 0 \quad \forall u\in  C.
\end{align*}
This is to say that $(u_*,p_*)$ is a saddle point  of $L(u,p)$ in $C\times K$.
\end{proof}

\begin{remark}
We emphasize that if the term $F(u)$ is absent in the Lagrangian $L(u,p)$, the projection on $\ddot C$ can be restricted only on $AC$. Then, the algorithm \eqref{AlgI} reads
\begin{align*}
\begin{cases}
p_{n+1} = \Pi_K ( p_n +\alpha (\ov q_n - G'(p_n))\\
q_{n+1} = \Pi_{AC} (q_n - \beta A(A^*A)^{-1}A^*p_{n+1} )\\
\ov q_{n+1} = 2q_{n+1} -q_n
\end{cases}
\end{align*}
where $q_n = A u_n$. In this situation, the hypothesis \eqref{Ha} should be replaced by $A^*A$ being positive definite so that $A^*A$ is invertible. And when the term $G(p)$ is not also present, the positive parameters $\alpha, \beta$ just have to satisfy the constraint $\alpha\beta <1$ to ensure the convergence of the algorithm.

\end{remark}

\bigskip
\section{Application to the shape optimization of thin torsion rods}
\label{appli}
Let $D$ be a bounded connected domain in $\RR^2$ and $s$ be a real parameter. We are interested in considering the variational problem, studied in \cite{alibert2013nonstandard}
\begin{align*}
m(s):= \inf \left\{ \int_D \varphi(\nabla u) : u\in H^1_0(D), \int_D u = s \right\}
\end{align*}
where $\varphi:\RR^2 \to \RR$ is a convex function given by
\begin{align*}
\varphi(z) := 
\begin{cases}
\frac{1}{2}(|z|^2 +1) &\tinm{if} |z| \ge 1 \\
|z| &\tinm{if} |z| \le 1.
\end{cases}
\end{align*}
We see that the integrand $\varphi$ is not strictly convex and not differentiable at $0$. Its Fenchel conjugate is the positive part of a quadratic form
\begin{align*}
\varphi^*(p)=\frac{1}{2} \left( |p|^2-1 \right)_+.
\end{align*}
It is clear that $\varphi^*$ is convex but non strictly convex, too. See Figure \ref{fig:varphi} for illustration.

\begin{figure}[ht]
\begin{center}
\includegraphics[scale=0.83]{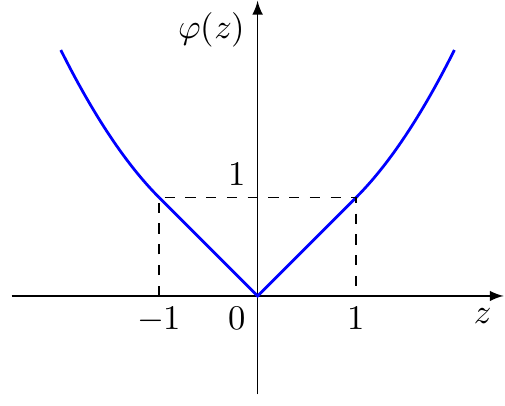}
\includegraphics[scale=0.83]{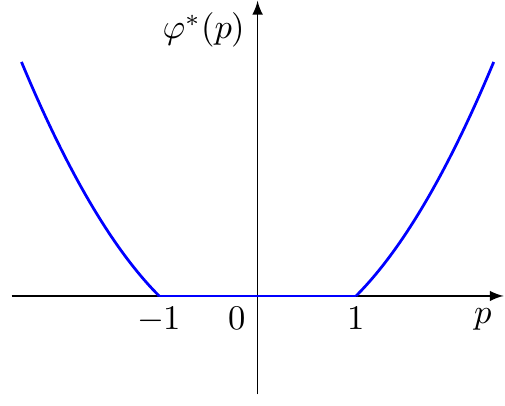}
\includegraphics[scale=0.83]{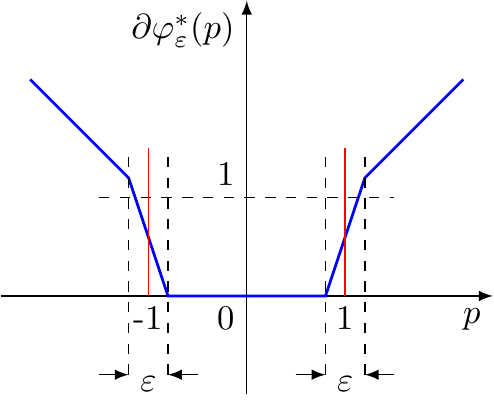}
\end{center}
\caption{ From left to right, the description of function $\varphi(z)$, its Fenchel conjugate $\varphi^*(p)$ and the regularization of $\varphi^*(p)$.}
\label{fig:varphi}
\end{figure}

We recall that the Fenchel conjugate of an integral functional is an integral of the Fenchel conjugate of the corresponded integrand, i.e. 
\begin{align*}
 \left( \int_D f(p(x)) dx \right)^* = \int_D f^*(p^*(x)) dx.
\end{align*}
For more details on this topic, we refer to \cite{ekeland1974analyse}. We are going to apply this fact to calculate the conjugate of $m(s)$. For every $\lambda\in\RR$,
\begin{align*}
m^*(\lambda) & = \sup_{s\in\RR} \left\{ \lambda s - m(s) \right\} = -\inf_{u\in H^1_0(D)} \left\{ \int_D \varphi(\nabla u) -\lambda \int_D u \right\} \\
& = -\inf_{u\in H^1_0(D)} \sup_{p\in L^2(D;\RR^2)} \left\{ \int_D p\cdot \nabla u - \int_D \varphi^*(p) -\lambda \int_D u \right\} 
\end{align*}
The second equality occurs with a replacement of the functional $\int_D \varphi(\nabla u)$ by a Fenchel conjugate. We regard that  the functional 
\begin{align*}
p\mapsto \inf_{u\in H^1_0(D)} \left\{ \int_D p\cdot \nabla u - \int_D \varphi^*(p) -\lambda \int_D u \right\}
\end{align*}
is finite if and only if $-\Div p = \lambda$. So, by passing the inf-sup permutation argument, $m^*(\lambda)$ can be rewritten as
\begin{align}
m^*(\lambda) = \inf \left\{ \int_D \varphi^*(p) : p\in L^2(D;\RR^2), -\Div p = \lambda \right\}.
\end{align}
Furthermore, optimal solutions of $m(s)$ and $m^*(\lambda)$ are characterized by certain optimality conditions
\begin{align*}
\begin{cases}
u \tinm{solution to} m(s) \\
p \tinm{solution to} m^*(\lambda) \\
\lambda\in \partial m(s)
\end{cases}
\qquad \Longleftrightarrow
\qquad
\begin{cases}
\int_D u =s \\
-\Div p = \lambda \\
p \in \partial \varphi(\nabla u) \tinm{a.e.}
\end{cases}
\end{align*}
Many interesting properties of functions $m(s)$ and $m^*(\lambda)$ were studied in \cite{alibert2013nonstandard}. One of them which is obviously seen is that the Fenchel equality is satisfied 
\begin{align*}
m(s) + m^*(\lambda) = s \lambda,
\end{align*}
since $\lambda \in \partial m(s)$.

We shall focus on the inf-sup formulation of $m^*(\lambda)$ which is adapted within our numerical schemes. But we must notice that $\varphi^*$ is neither strictly convex and nor differentiable on the unit circle $\{p\in\RR^2: |p|=1 \}$. A regularization for $\varphi^*$ should be done before enforcing the algorithms. See Figure \ref{fig:varphi} for visualization. Instead of taking the subgradient of $\varphi^*$, we regularize it by removing the discontinuities with an $\e$-affine symmetric connection
\begin{align*}
\partial \varphi^*_\e(p) 
=
\begin{cases}
\frac{2 + \e }{2\e} \left( |p| + \frac{\e}{2} -1 \right) \frac{p}{|p|}  &\tinm{if} ||p|-1| \le \frac{\e}{2}\\
\partial \varphi^*(p) &\tinm{otherwise.}
\end{cases}
\end{align*}
It is ready to find a saddle point of the problem with $\e$-regularization.

The solution of such a problem in the context of shape
optimization of thin torsion rods exhibits regions where $u$ is constant corresponding to regions without material, regions where $|\nabla u|\ge 1$ corresponds to the optimal region for the material in order to struggle torsion. Regions where $0<|\nabla u|< 1$ describes the regions of homogenized material for which the convexity of the Lagrangian is not strict. This makes the problem nontrivial. Depending on the mass constraint, such a homogenization region can appear (low mass constraint leading to the so-called "homogeneous solution") or not ("special solution").
To answer the question whether an optimal design contains some homogenization region is equivalent to investigate when the special solution exists. And naturally, we wonder in which domain $D$ special solutions present. These are still open issue.
In Figure \ref{figsol_special_homog}, the magnitude of the gradient of the solution is plotted, a special solution can be found in the left figure, a homogenized solution is in the middle with weak gradient (lower than $1$) on regions limited by thick lines corresponding to $|\nabla u|=1$. In the right figure, the value $\lambda$ is close to the critical Cheeger constant while as we see, the optimal shape becomes thinner and tends to the boundary of the Cheeger set of the domain $D$ \cite{kawohl2006characterization}.
\begin{figure}[ht]
\begin{center}
\includegraphics[scale=0.115]{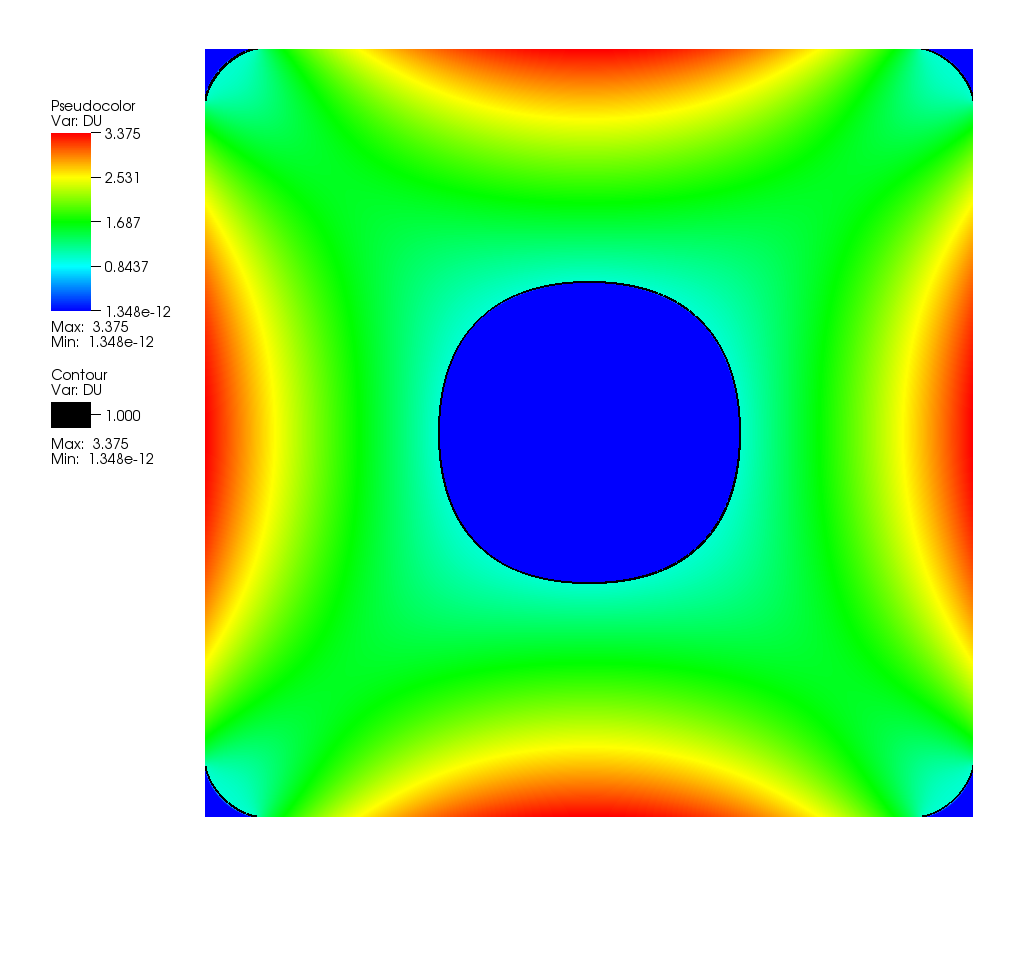}
\includegraphics[scale=0.115]{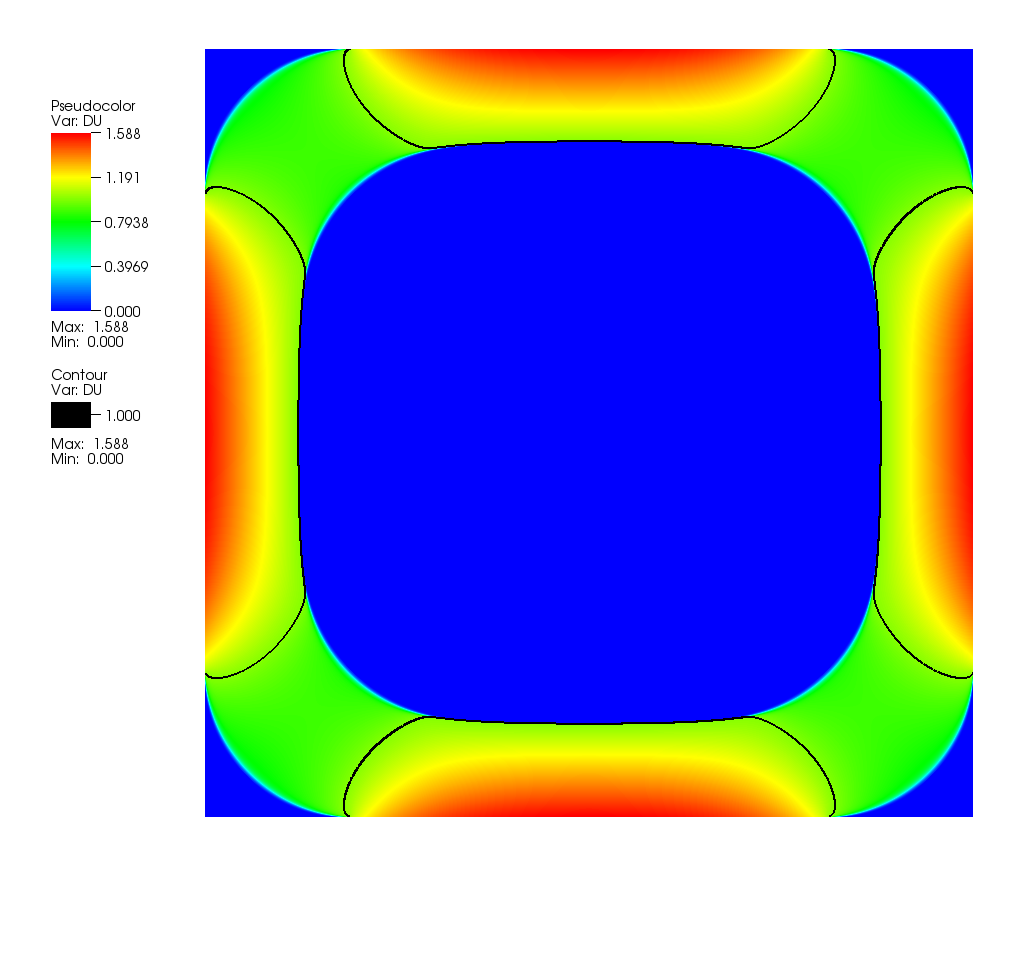}
\includegraphics[scale=0.115]{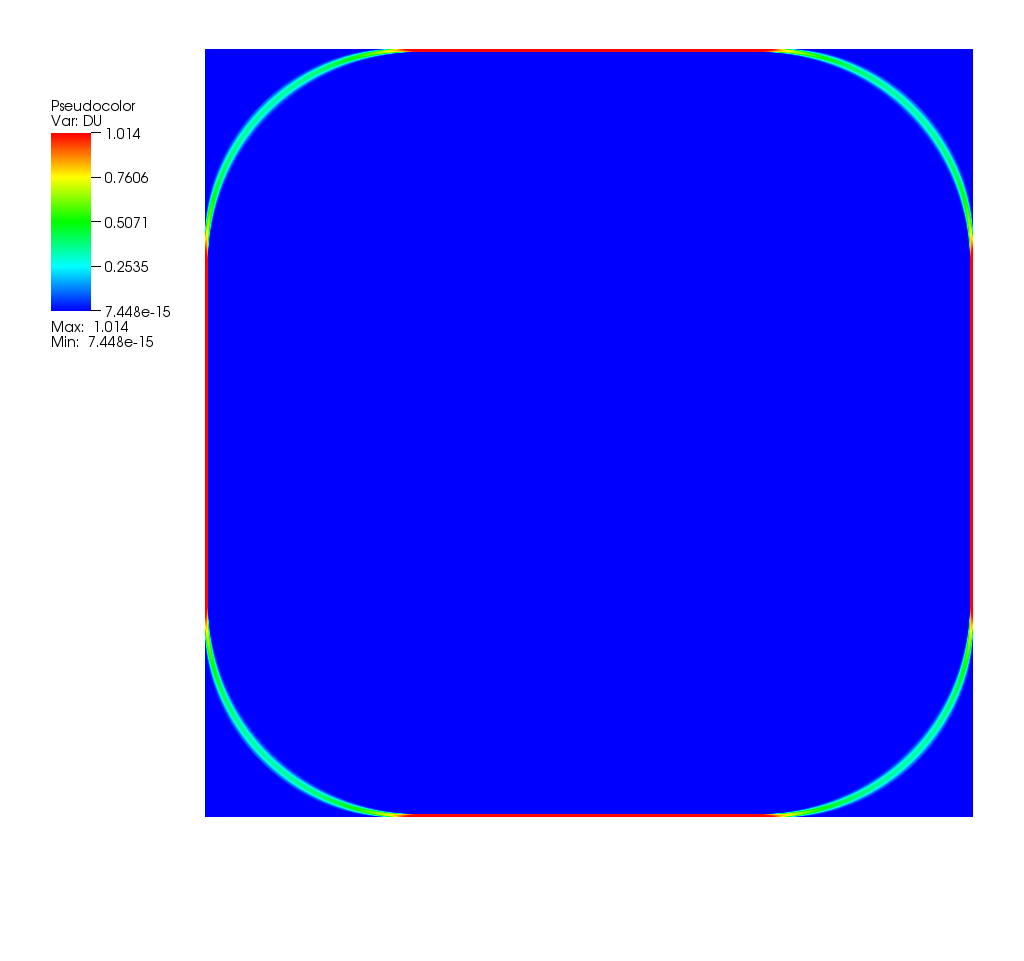}
\end{center}
\caption{\label{figsol_special_homog}Special (left) and homogenized (middle and right) solutions depending on the mass constraint.}
\end{figure}

It is easy to see that if $D$ is a symmetric simple connected domain then solutions of $m(s)$ are symmetric. When $D$ is a square, the symmetrization allows dividing $D$ in four parts. Without lost of generality, let us describe the discretization settings with $D=[0,0.5]^2$ instead of unit square. Let $x=(x_1,x_2)$, $p=(p_1,p_2)$. The subdivision leads to the appearance of extra boundary conditions
\begin{align*}
u(\cdot,0.5) = u(0.5, \cdot) = 0 \\
p_1(0,\cdot) = p_2(\cdot,0) = 0 .
\end{align*}

We shall implement our algorithms with space discretization on staggered MAC  grids \cite{harlow1965numerical}. This choice leads to discretizations satisfying $\Div^h = -(\nabla^h)^*$, where the superscript denote the discretization with a mesh size $h$. $\nabla^h$ stands for discrete gradient operator.

We provide an explicit iterative process in discrete scheme with
\begin{align}
\label{ES}
\begin{cases}
p_{n+1}^h = p_n^h + \alpha (\nabla^h \overline{u}_n^h - \partial \varphi^*_\e (p_n^h)) \\
u_{n+1}^h = \Pi_C^h( u_n^h + \beta (\Div^h (p^h_{n+1}) + \lambda))\\
\overline{u}^h_{n+1} = 2 u^h_{n+1} - u^h_n.
\end{cases}
\end{align}
where $\Pi_C^h$ is the discretized projection. In this case, the projection is just simple to keep the boundary condition $ u = 0$ on $\partial D$. Since Lipschitz constants $L_f=0$, $ L_g= (2 + \e)/ 2\e$, the positive parameters $\alpha,\beta$ should be chosen such that 
\begin{align*}
\alpha < \frac{4\e}{2+\e} , \qquad \alpha\beta c_h^2 + \frac{\alpha(2+\e)}{4\e} \le 1 \qquad \tinm{with} c_h :=||\nabla^h||= \frac{2\sqrt{2}}{h}.
\end{align*}
We remark that $\alpha < 4$ and tends to $0$ as $\e \to 0$. But with the second condition, the product $\alpha\beta$ should be of order $O(h^2)$.

In the point of view of implicit scheme, the proposed convergent process reads
\begin{align}
\begin{cases}
p_{n+1}^h = p_n^h + \alpha (\nabla^h \overline{u}_n^h - \partial \varphi^*_\e (p_n^h)) \\
u_{n+1}^h =  u_n^h + \beta (I-\Delta^h)^{-1}(\Div^h (p^h_{n+1}) + \lambda)\\
\overline{u}^h_{n+1} = 2 u^h_{n+1} - u^h_n.
\end{cases}
\label{IS}
\end{align}
We replaced $\Div^h (p^h_{n+1}) + \lambda$ in the explicit process by $\widetilde u_n = (I-\Delta^h)^{-1}(\Div^h (p^h_{n+1}) + \lambda)$ which is solution of the equations
\begin{align*}
\begin{cases}
(I-\Delta^h) v = \Div^h (p^h_{n+1}) + \lambda \\
v = 0 \quad \tinm{on} \quad \partial D
\end{cases}
\end{align*}
The projection $\Pi_C^h$ then disappears since the boundary condition on $u$ is added within resolving $\widetilde u_n$. Moreover, the positive parameters are simplified 
\begin{align*}
\alpha < \frac{4\e}{2+\e} , \qquad \alpha\beta + \frac{\alpha(2+\e)}{4\e} < 1.
\end{align*}
We see that the choices of $\alpha,\beta$ now do not depend on $c_h$ and thus, the product $\alpha\beta$ is of order $O(1)$ with respect to $h$. Nevertheless, the step size $\alpha$ is still restricted by small $\e$. 

The regularizing parameter $\e$ is linked to the grid size, in practice we take for instance $\e=3h$. This relation makes the step size $\alpha$ be of order of $h$  for the implicit algorithm (\ref{IS}). It leads to introduce a new algorithm with $\kappa$ sub-iterations on the explicit part of  (\ref{IS}), in implementation $\kappa=50$:
\begin{align}
\begin{cases}
p_{n+1,0}^h = p_{n,\kappa}^h\\ 
p_{n+1,k+1}^h = p_{n+1,k}^h + \frac{\alpha}{\kappa} (\nabla^h \overline{u}_n^h - \partial \varphi^*_\e (p_{n+1,k}^h)), \qquad k=0,1,...,\kappa-1 \\
u_{n+1}^h =  u_n^h + \beta (I-\Delta^h)^{-1}(\Div^h (p_{n+1,\kappa}^h) + \lambda)\\
\overline{u}^h_{n+1} = 2 u^h_{n+1} - u^h_n.
\end{cases}
\label{ISS}
\end{align}
The algorithm (\ref{ISS}) allows $\alpha$ to be $\kappa$  times bigger and then reduces the number of iterations in $n$.

From now on, the algorithm (\ref{ISS}) is called Implicit Sub-iteration Scheme (ISS), the algorithm (\ref{IS}) is called Implicit Scheme (IS) and the algorithm (\ref{ES}) is called Explicit Scheme (ES).

\begin{figure}[ht]
\begin{center}
\includegraphics[scale=0.78]{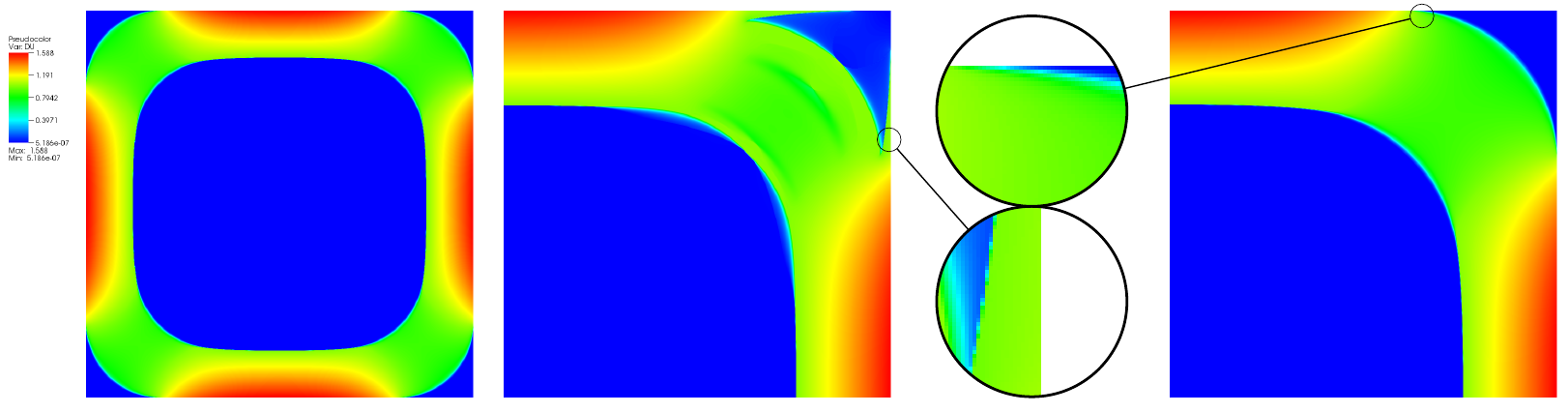}
\end{center}
\caption{Case $\lambda=5$ for (ES) on the left, (IS) in the middle, (ISS) on the right, for the same grid size.}\label{lambda5}
\end{figure}

In Figure \ref{lambda5}, we present  the Euclidean norm of the gradient of a solution, i.e. $|\nabla u|$. The left figure shows an expected solution with the explicit scheme. The centered and the right ones are in a quarter domain which are done with implicit methods (IS) and (ISS), respectively. We see that in  implicit scheme (IS), the contact zones are still not tangent to the boundary of domain (the region on which it is difficult to converge the algorithm). And, the (IS) algorithm, expected to be slower than (ISS), introduces an other drawback with numerical artefacts. 

In the processes \eqref{IS}-\eqref{ISS}, the inverse Laplacian computation is the most costly. The computational cost thus depends highly on the solver used for the inverse Laplacian operator. If one uses a multigrid or a FFT solver, it can be of order $\displaystyle \frac{1}{h^{2} \log h}$. Besides, handling a multigrid solver, for instance AGMGPAR (A parallel version of Algebraic Multigrid method), see \cite{notay2010user}, and MAC scheme, it easy to implement our algorithms with MPI (Massage Passing Interface) library which provides an effective environment for parallel computation. 
In the following computations,  comparisons between the different schemes are done with a  fixed number of process to $6$, leading to good scalability for all methods.

Before comparing the computational cost, we ensure that the algorithms converge to the exact solution as the grid size $h$ goes to zero. We then consider a case in which the unique exact solution is known, that is when $D$ is a disk.
In such a case, the solution is a special solution and it is radial  with  mass concentrated on the periphery and with an internal radius of $\overline R=\frac 2 \lambda$ (see Figure \ref{exact}), see \cite{alibert2013nonstandard} for the expression of the exact solution.
 \begin{figure}[ht]
\begin{center}
\includegraphics[trim={1.2cm 4.cm 1.cm 1.cm},clip,scale=0.115]{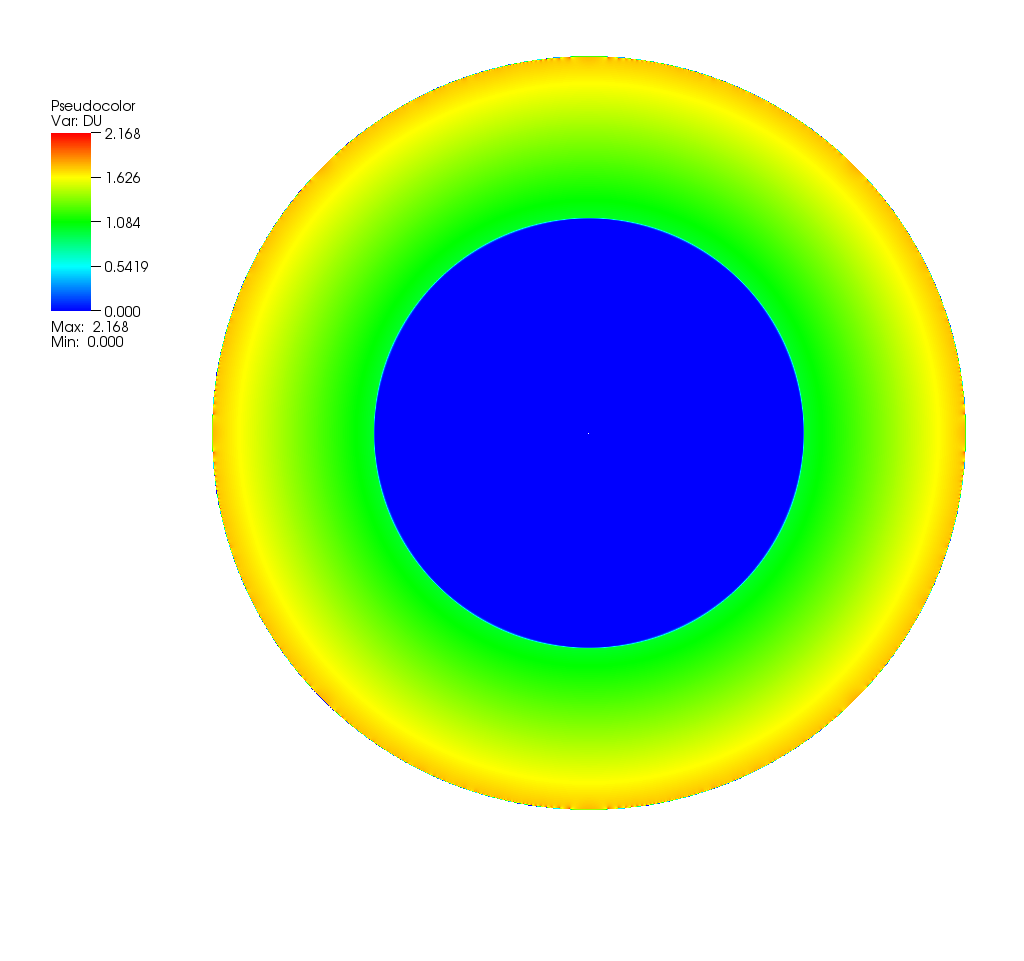}
\end{center}
\caption{Exact solution when the constraint domain $D$ is a disk.}\label{exact}
\end{figure}
We are then able to compute the numerical internal radius $r_N$, $R_N$ defined respectively as the maximal value of radius where $|\nabla u|$ is smaller than a half and the minimal value of radius where $|\nabla u|$ is bigger than one. The error on the internal radius is measured as a number of cells for different cell sizes. The radius error is of order of a half of grid size whatever the grid, as shown on Figure \ref{radius}.

\begin{figure}[ht]
\begin{center}
\includegraphics[scale=0.7]{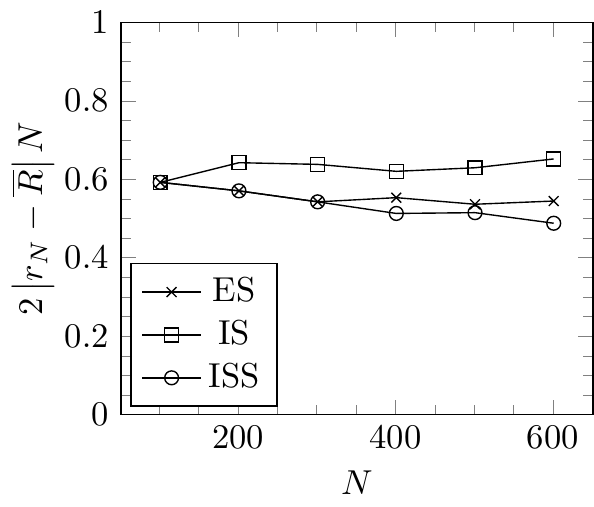}\qquad 
\includegraphics[scale=0.7]{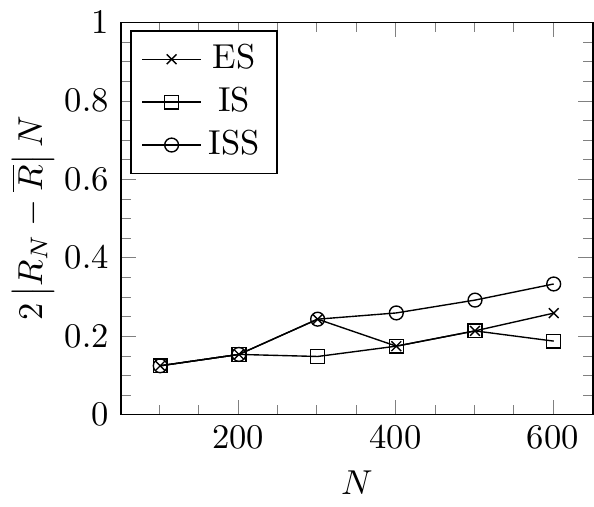}
\end{center}
\caption{Graph of $2 {|r_N-\overline R|}{N}$ and $ 2{|R_N-\overline R|}{N}$ with respect to the grid size $N$.\label{radius}}
\end{figure}

We are now concerned with the comparison of computational cost between schemes (ES), (IS), (ISS) in the case where $\lambda=5$,  for a unitary square $D$, so that an homogeneous solution occurs.

\begin{figure}[ht]
\begin{center} 
\includegraphics[trim={1.cm 0.cm 1.cm 1.cm},clip,scale=0.115]{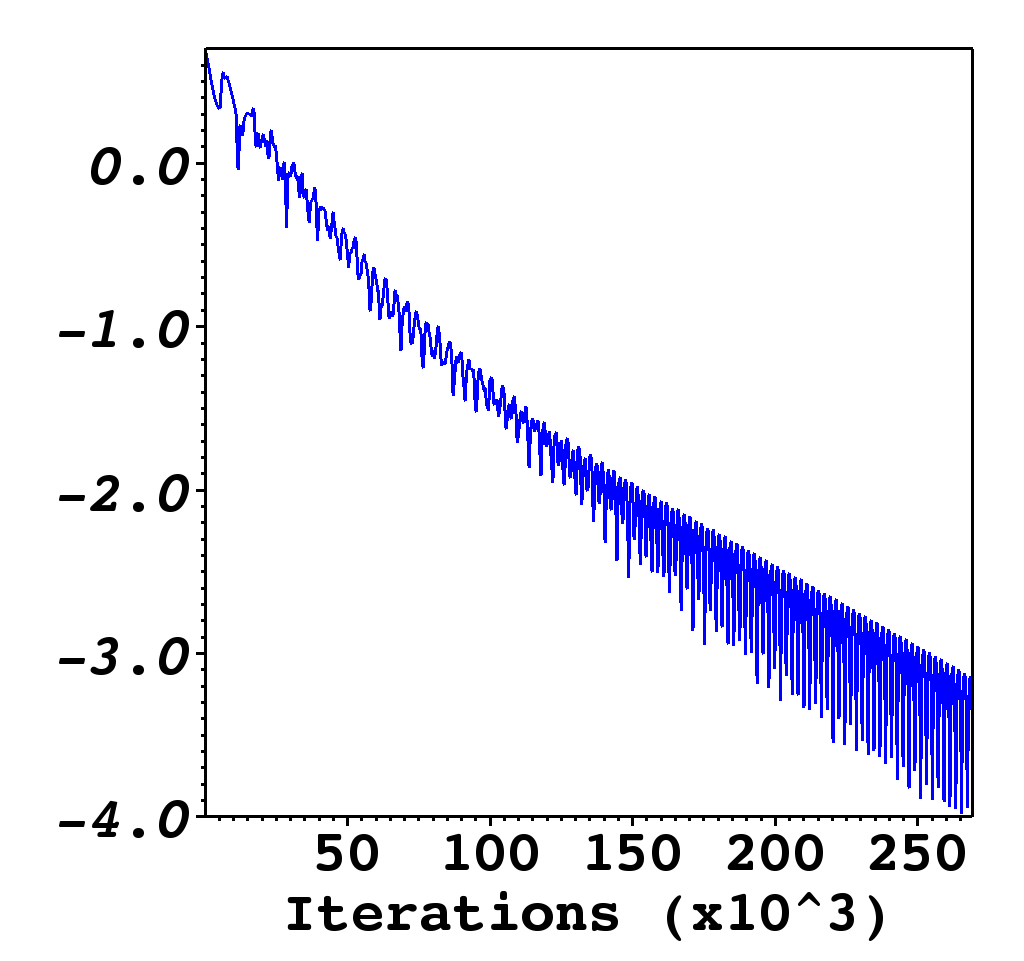} \quad
\includegraphics[trim={1.cm 0.cm 1.cm 1.cm},clip,scale=0.115]{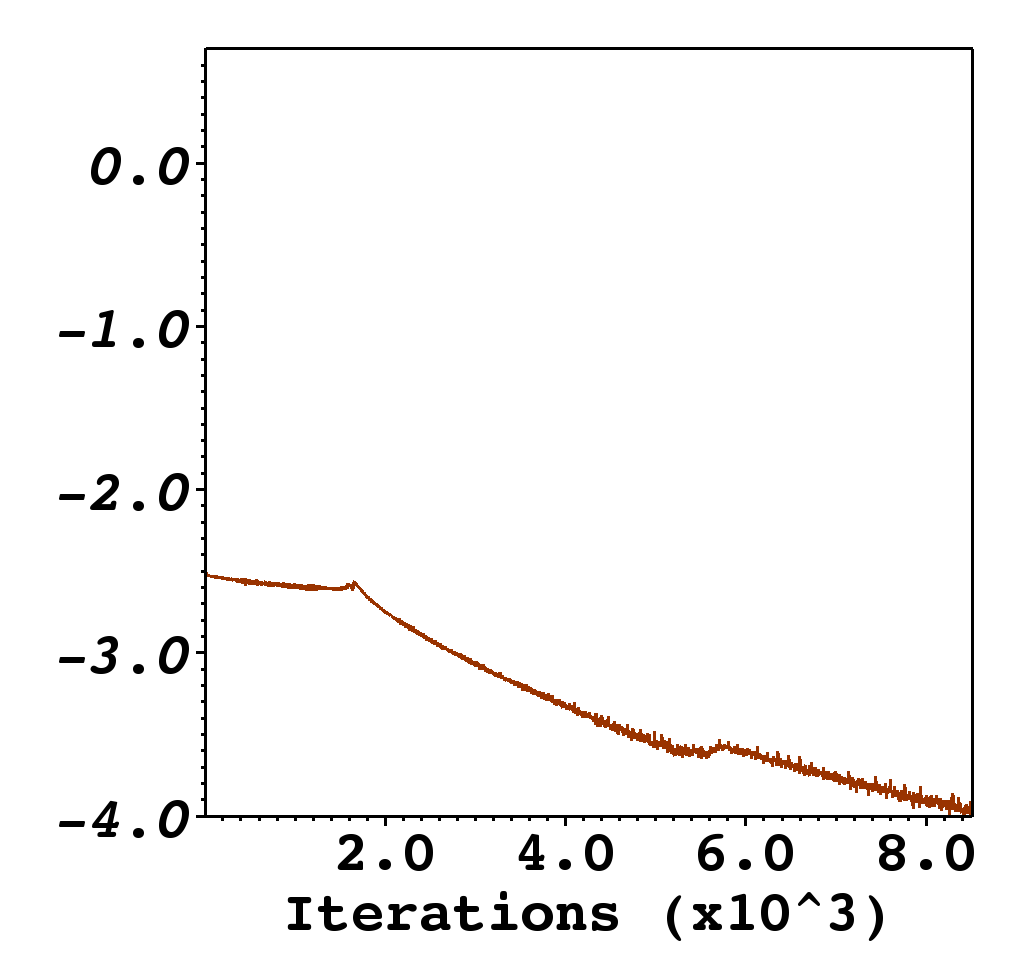} \quad
\includegraphics[trim={1.cm 0.cm 1.cm 1.cm},clip,scale=0.115]{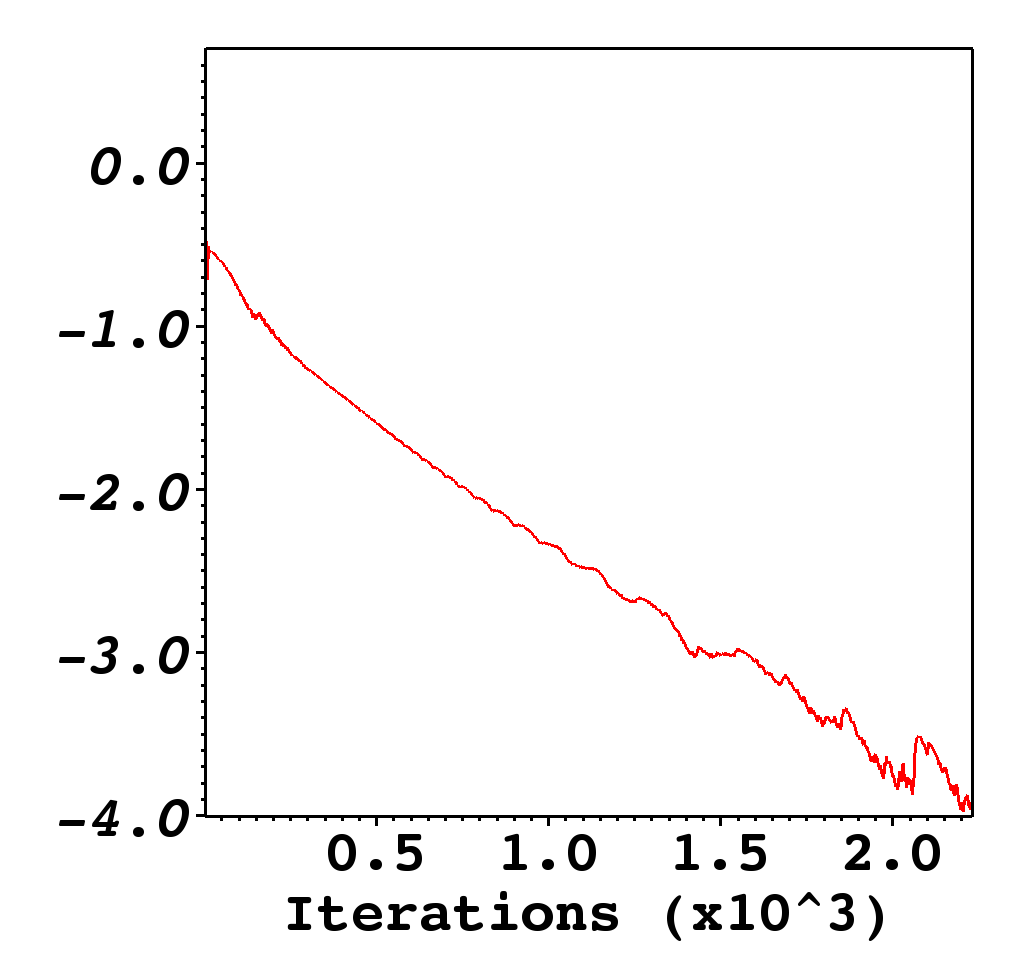}
\end{center}
\caption{Convergence of algorithms with criterion $\|\Div p + \lambda\|_{L^2} < 10^{-4}$ in case $\lambda=5$. From left to right, the first figure is in (ES), the second with (IS), and the last one is done with (ISS)}\label{compare3}
\end{figure}

We remark that we have to well pay for the cost of inverse Laplacian operator. So, the choice of solvers should be carefully considered. But, the positive side of semi-implicit scheme is to reduce globally many iterations of the iterative process. When the projections on convex sets become more expensive, this reduction of iteration will evidently save the computational time. At the moment, semi-implicit scheme is the most efficient, see Table \ref{table1}, Figure \ref{compare3} and Figure \ref{compare3method}.

\begin{table}
\begin{center}
\begin{tabular}{|c|c|c|c|c|c|c|}
\hline
\multicolumn{7}{|c|}{ $\| \Div p + \lambda \|_{L^2} < 10^{-4}$ with MPI in 6 processes  }\\
\hline
\multirow{2}{*}{$N$} & \multicolumn{3}{c|}{iterations} & \multicolumn{3}{c|}{time (seconds) }\\
\cline{2-7}
 & \bf ES & \bf IS & \bf ISS & \bf ES & \bf IS & \bf ISS \\
\hline
 101 & 9451 & 763 & 360 & 0.59 & 12.75 & 5.84 \\
 201 & 21647 & 1340 & 545 & 4.92 & 47.63 & 20.18\\
 301 & 34719 & 1857 & 733 & 17.96 & 92.01 & 39.93\\
 501 & 59438 & 2822 & 856 & 113.29 & 311.92 & 121.36\\
 801 & 98484 & 4072 & 1251 & 848.15 & 819.10 & 467.12\\
 1201 & 154107 & 5777 & 1596 & 3590.69 & 2625.47 & 1460.76\\
 1701 & 232793 & 7629 & 2038 & 11520.75 & 7642.35 & 3876.28\\
 1921 & 268999 & 8507 & 2230 & 17183.18 & 11174.52 & 5717.89\\
\hline
\end{tabular}
\end{center}
\vspace{1em}
\caption{Comparison of three methods in iterations and computational time in case $\lambda=5$. The stop criterion is $\| \Div p + \lambda \|_{L^2} < 10^{-4}$. They are implemented with MPI in 6 processes.}\label{table1}
\end{table}

\begin{figure}[ht]
\begin{center}
\includegraphics[scale=0.68]{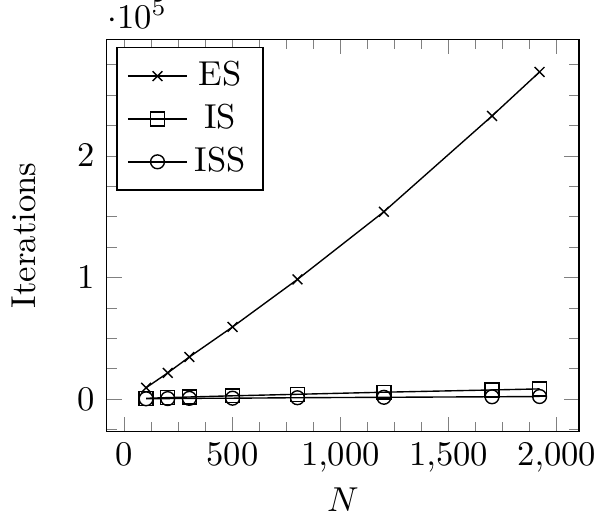}~
\includegraphics[scale=0.68]{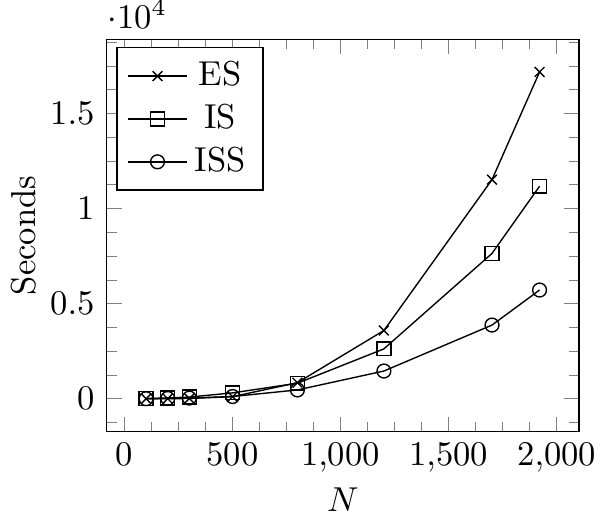}~
\includegraphics[scale=0.68]{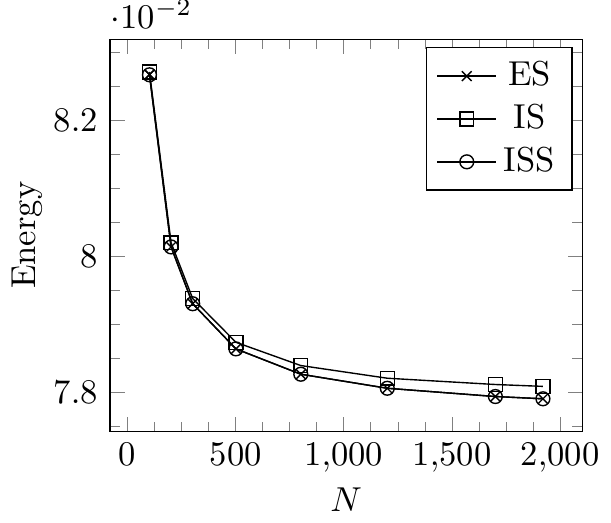}
\end{center}
\caption{Comparison of three methods in term of iterations, computational time, convergence of energy function in case $\lambda=5$.}\label{compare3method}
\end{figure}

\bigskip

\section{Conclusion}
The generalized explicit scheme for searching saddle points of Lagrangians of type \eqref{L} is convergent and widely applicable. The main contribution of this article is to propose  a semi-implicit extension of such an algorithm. It remains convergent under less restrictive constraint on numerical parameters.
Non differentiability can possibly occur in Lagrangians, and in this case, to fix it, we regularize derivatives of Lagrangians, with a smoothing parameter linked to the space discretization parameter.
 The semi-implicit scheme, coupled to a splitting method for the rapidly computed explicit part, provides a robust acceleration of the computational cost in comparison with the fully explicit one. 
 The number of iterations in order to reach a precise convergence is widely reduced as shown in Section \ref{appli}. Even if an iteration is more costly for the semi-implicit algorithm, the global computational cost is clearly reduced, specially for fine grids and provides accurate solutions. Furthermore, such an algorithm can reveal even more  performing than explicit scheme when Lipschitz constants functions $F$ and $G$ are lower and also if the Lagrangian contains large quadratic terms with respect to the $p$ variable. This algorithm has then been willingly tested on a stiff problem.
 In any case, the solver of a Laplacian type problem must be carefully chosen since it mainly contributes to the computational cost. Once set up, the heavier projections are, the more efficiency the semi-implicit scheme shows.

\bibliographystyle{siamplain}

\end{document}